\documentclass[12pt]{amsart}
\usepackage[latin1]{inputenc}
\usepackage{latexsym}
\usepackage{amsfonts}
\usepackage{amsmath,amssymb}
\usepackage{amscd}
\usepackage[all]{xy}

\frenchspacing
%%%%%%%%%%%%%%%%%%%%%%%%%%%%%%%%%%%%%%%%%%%%%%%%%%%%%%%%%%%%%%%%%%%%%%

 %%%%%%%%%%%%%%%%%%%%%%%%%%%%%%%%%%%%%%%%%%%%%%%%%%%%%%%%%%%%%%%%%%%%%%

\def\var{\varepsilon}

%%%%%%%%%%%%
% TEOREMAS       %
%%%%%%%%%%%%
\theoremstyle{plain}
\newtheorem{theorem}{Theorem}[section]
\newtheorem{lemma}[theorem]{Lemma}
\newtheorem{proposition}[theorem]{Proposition}
\newtheorem{corollary}[theorem]{Corollary}

\theoremstyle{definition}
\newtheorem{example}[theorem]{Example}
\newtheorem{definition}[theorem]{Definition}

\theoremstyle{remark}
\newtheorem*{remark}{Remark}
%%%%%%%%%%%%%%%%%%%%%%%%%%%%%%%%%%%%%%%%%%%%%%%%%%%%%%%%%%%%%%%%%%%%%%
\begin{document}
%%%%%%%%%%%%%%%%%%%%%%%%%%%%%%%%%%%%%%%%%%%%%%%%%%%%%%%%%%%%%%%%%%%%%%
\title[(m)-condition and Strong Milnor Fibrations ]{Uniform (m)-condition and Strong Milnor Fibrations }

\author[Ara\'ujo dos Santos]{Raimundo N. Ara\'ujo dos Santos}
%\author[Marias Ruas]{Maria Aparecida Soares Ruas}
\thanks{The author acknowledges the financial support from FAPESP/Brazil grant $\#$ 05/58953-7}
\email[ Ara\'ujo dos Santos, Raimundo N.]{rnonato@icmc.usp.br}
\address{\hspace{-0.4cm}Instituto de Ci\^encias Matem\'aticas e de Computa\c c\~ao\newline
Universidade de S\~ao Paulo\\ Av. Trabalhador S\~ao-Carlense, 400 -
Centro  Postal Box 668, S\~ao Carlos - S\~ao Paulo - Brazil
\newline Postal Code 13560-970, S\~ao Carlos, SP, Brazil.}

%\date{\today }

\keywords{real Milnor fibration, topology of singularity,
stratification theory}

\footnotetext{2000 Mathematics Subject Classification. Primary
32S55, 32S60, 58K15 }

%\subjclass{Primary ??????; Seconndary ??????? }

\begin{abstract}

In this paper we study the Milnor fibrations associated to real
analytic map germs $\psi:(\mathbb{R}^{m},0) \to (\mathbb{R}^2,0)$
with isolated critical point at $0\in \mathbb{R}^{m}$. The main
result relates the existence of called Strong Milnor fibrations
with a transversality condition of a convenient family of analytic
varieties with isolated critical points at the origin $0\in
\mathbb{R}^{m}$, obtained by projecting the map germ $\psi$ in the
family $L_{-\theta}$ of all lines through the origin in the plane
$\mathbb R^{2}.$

\end{abstract}

\maketitle

%%%%%%%%%%%%%%%%%%%%%%%%%%%%%%%%%%%%%%%%%%%%%%%%%%%%%%%%%%%%%%%%%%%%%%

\section{Introduction}

In \cite{mi} Milnor proved that if \[\psi : \left( {\mathbb
C}^{n+1},0\right )\longrightarrow ({\mathbb C},0)\,, \] is the
germ of a holomorphic function with a critical point at $0$, then
for every sufficiently small $\epsilon > 0$ the map
$\displaystyle{\frac{\psi}{||\psi||}}:{S}_{\epsilon}^{2n+1}\setminus
{K}_{\epsilon} \to {S}^{1}$ is the projection map of a smooth
locally trivial fibre bundle, where
${K}_{\epsilon}={\psi}^{-1}(0)\cap{S}_{\epsilon}^{2n+1}$ is the
link of singularity at $0$. This is the Milnor fibration for 
holomorphic singularities functions germs.

\hspace{1cm}

Milnor also proved in the last chapter of his book a fibration
theorem for real singularities. He showed that if

$$\psi :(\mathbb{R}^m ,0)\to (\mathbb{R}^p ,0)\,,
m\geq p\geq 2,$$

\noindent is a real analytic map germ whose derivative $D{\psi}$
has rank $p$ on a punctured neighborhood of $0\in \mathbb R^m$,
then there exists $\epsilon >0$ and $\eta >0$ sufficiently small
with $0<\eta \ll\epsilon <1$, such that considering
$E:=B_{\epsilon}^{m}(0)\cap \psi ^{-1}( S_{\eta}^{p-1})$,
$B_{\epsilon}^{m}(0)$ the open ball centered in $0\in
\mathbb{R}^{m}$ and radius $\epsilon $, we have that $\psi
_{\mid_{E}}: E \to S_{\eta}^{p-1}$ is a smooth locally trivial
fibre bundle. Milnor also proved the existence of a diffeomorphism
that pushes $E$ to $S^{m-1}_{\epsilon}\setminus N_{K_{\epsilon}}$,
where $N_{K_{\epsilon}}$ denotes a tubular neighborhood of the
link $K_{\epsilon}$ in $S^{m-1}_{\epsilon}.$ Moreover, this fibre
bundle can be extended to the complement of the link in the sphere
$S^{m-1}_{\epsilon}\setminus K_{\epsilon}$, with each fiber being
the interior of a compact manifold bounded by $K_{\epsilon}$. But,
in all these constructions we cannot guarantee that the map
$\displaystyle{\frac{\psi}{\|\psi \|}}$ is the projection
of the fibration, as it is easily shown by the example below.

\begin{example}\cite[page 99]{mi}
\begin{equation*}
\begin{cases}
P=x \\
Q=x^{2}+y(x^{2}+y^{2})
\end{cases}
\end{equation*}
\end{example}

\begin{definition}\cite{rsv}

Let $\psi:(\mathbb{R}^{m},0)\to (\mathbb{R}^{p},0)$, $m\geq p\geq
2$, be a map germ with isolated singularity at the origin. If for
all $\epsilon >0$ sufficiently small, the map $\phi
=\displaystyle{\frac{\psi}{\|\psi\|}}:S^{m-1}_{\epsilon}\setminus
K_{\epsilon} \,\to \,{S}^{p-1}\,$, is a projection of a smooth
locally trivial fibre bundle, where $K_{\epsilon}$ is the link of
singularity at $0$, we say that the map germ  satisfies the {\bf
Strong Milnor condition} at $0\in \mathbb{R}^{m}.$
\end{definition}

The problem of studying real isolated singularities for which the
map $\displaystyle{\frac{\psi}{\|\psi \|}}$ extends
as a smooth projection of the fibre bundle
$S^{m-1}_{\epsilon}\setminus {K}_{\epsilon} \,\to \, S^{p-1}$, as in
the holomorphic case, was first studied by A. Jacquemard in \cite
{ja}, \cite{ja1}, by J. Seade, Ruas and Verjovsky in \cite{rsv},
and by the author and Ruas in \cite{rs}.

The Jacquemard's approach \cite {ja} was the following: considering
$\psi =(P,Q):(\mathbb R^m ,0)\to (\mathbb R^2 ,0)\ $ an analytic
real map germ with isolated singularity at $0\in \mathbb{R}^m,$ he
gave two conditions which were sufficient to guarantee that the
map $\displaystyle{\frac{\psi}{\|\psi \|}}$ extends
to all $S^{m-1}_{\var}\setminus {K}_{\epsilon}$ as a smooth
projection map of a locally trivial fiber bundle over $S^{1}$,
i.e, the function germ $\psi$ satisfy the Strong Milnor condition at
origin. The first condition (A) is geometric: the angle between
the gradient vector fields $\nabla P$ and $\nabla Q$ has an upper bound
smaller than 1; the second condition (B) is algebraic: the
Jacobian ideals of $P$ and $Q$ have the same integral closure in
the local ring of real analytic function germs at $0 \in \mathbb
R^m$. With these tools the author recovered some main ideas given
by Milnor on his book \cite{mi} to construct the locally trivial
fibre bundle.

In another direction, using stratification theory and singularity
theory, in \cite{rs} the authors proved that the Jacquemard's conditions
are not necessary for the existence of a Milnor fibration. The result were
the following:

still considering $\psi =(P,Q):(\mathbb R^m ,0)\to (\mathbb R^2 ,0)\ $ a
real analytic map germ, with isolated singularity at $0\in
\mathbb{R}^m$, and $\Psi(x,t)$ a convenient family of functions
associated to map $\psi $ (called Seade's family, see
Definition~2.4, page 5), $X=\Psi^{-1}(0)\setminus{\{0\}\times
\mathbb{R}}$ and $Y=\{0\}\times \mathbb{R}$ a stratification of
analytic variety $\Psi^{-1}(0) $, holds:

\begin{theorem}\cite{rs}
If the real analytic map germ $\psi=(P,Q):(\mathbb{R}^{m},0)$ $\to
(\mathbb{R}^{2},0)$ satisfies the Jacquemard hypotheses, then the
pair $(X,Y)$ as above satisfies the Verdier's $(w)-$condition.
\end{theorem}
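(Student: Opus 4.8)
The plan is to verify the $(w)$-condition directly from its infinitesimal definition, reducing it to a single gradient inequality that the Jacquemard hypotheses are designed to produce. Recall that $(X,Y)$ satisfies Verdier's $(w)$-condition at a point $(0,t_0)\in Y$ if there exist $C>0$ and a neighbourhood $U$ such that $\delta\bigl(T_{(0,t)}Y,\,T_{(x,t)}X\bigr)\le C\,\|x\|$ for every $(x,t)\in X\cap U$, where $\delta(V,W)=\sup\{\operatorname{dist}(v,W):v\in V,\ \|v\|=1\}$ and $\|x\|=\operatorname{dist}\bigl((x,t),Y\bigr)$ (checking with the nearest point $(0,t)$ suffices, since $T_yY$ is independent of $y\in Y$). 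As $Y=\{0\}\times\mathbb R$ is a line, $T_{(0,t)}Y=\mathbb R\,e_{m+1}$; and since $\psi$ has an isolated singularity one first checks that $X$ is a smooth hypersurface of $\Psi^{-1}(0)$ with $T_{(x,t)}X=\ker D\Psi(x,t)=\bigl(\nabla\Psi(x,t)\bigr)^{\perp}$. Then $\delta\bigl(T_{(0,t)}Y,T_{(x,t)}X\bigr)=\operatorname{dist}\bigl(e_{m+1},(\nabla\Psi)^{\perp}\bigr)=|\partial_t\Psi|/\|\nabla\Psi\|$, so the whole statement reduces to the estimate $|\partial_t\Psi(x,t)|\le C\,\|x\|\,\|\nabla\Psi(x,t)\|$ on $X$ near $Y$.

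Next I would exploit the special form of the Seade family. Writing it in the normalized form $\Psi(x,t)=-P(x)\sin t+Q(x)\cos t$ (the projection of $\psi$ onto the normal of $L_{t}$), a direct computation gives the identity $\Psi^{2}+(\partial_t\Psi)^{2}=P^{2}+Q^{2}=\|\psi\|^{2}$, while $\nabla_x\Psi=-\sin t\,\nabla P+\cos t\,\nabla Q$ is a linear combination of $\nabla P$ and $\nabla Q$ with coefficient vector $(-\sin t,\cos t)$ of unit length. On $X\subset\Psi^{-1}(0)$ the first identity collapses to $|\partial_t\Psi|=\|\psi(x)\|$, and therefore $\|\nabla\Psi\|^{2}=\|\nabla_x\Psi\|^{2}+\|\psi(x)\|^{2}$. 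A short algebraic manipulation then shows that, for $\|x\|$ small, the target inequality is equivalent to $\|\psi(x)\|\le C'\,\|x\|\,\|\nabla_x\Psi(x,t)\|$, which is what the rest of the argument must establish.

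This inequality splits into two independent estimates. The first is a lower bound $\|\nabla_x\Psi(x,t)\|\ge c\,\|D\psi(x)\|$, uniform in $t$, and this is exactly where the Jacquemard hypotheses enter. Condition (B), equality of the integral closures of the Jacobian ideals $J(P)$ and $J(Q)$, is equivalent via the \L ojasiewicz characterization of integral closure ($h\in\overline{J(P)}\Leftrightarrow|h|\le C\|\nabla P\|$) to the comparison $\|\nabla P\|\asymp\|\nabla Q\|$ near $0$; condition (A), the uniform bound away from $1$ on the cosine of the angle between $\nabla P$ and $\nabla Q$, gives $\|a\nabla P+b\nabla Q\|\ge c(|a|\|\nabla P\|+|b|\|\nabla Q\|)$ for all coefficients. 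Combining the two for the unit vector $(a,b)=(-\sin t,\cos t)$ yields $\|\nabla_x\Psi\|\ge c'(\|\nabla P\|+\|\nabla Q\|)\ge c''\|D\psi\|$. The second estimate is the general bound $\|\psi(x)\|\le C\,\|x\|\,\|D\psi(x)\|$, valid for any analytic germ with $\psi(0)=0$: along any analytic arc $x(s)$ with $x(0)=0$ one has $\psi(x(s))=\int_0^s D\psi(x(u))\,x'(u)\,du$, and comparing leading orders in $s$ (with $\|\psi(x(s))\|=O(s^{\gamma+k})$ against $\|x(s)\|\,\|D\psi(x(s))\|\asymp s^{\gamma+k}$) shows that $\|\psi(x)\|/(\|x\|\,\|D\psi(x)\|)$ stays bounded; the curve selection lemma upgrades this to a uniform bound on a punctured neighbourhood. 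Chaining the two estimates gives $\|\psi(x)\|\le (C/c'')\,\|x\|\,\|\nabla_x\Psi\|$ and closes the proof.

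The main obstacle is the uniform lower bound of the third paragraph, and specifically the passage from the algebraic condition (B) to the analytic comparison $\|\nabla P\|\asymp\|\nabla Q\|$: one must invoke the precise \L ojasiewicz/integral-closure dictionary and ensure that its constants, together with the angle bound from (A), are genuinely uniform on a full neighbourhood of $0$, so that the resulting bound for $\|\nabla_x\Psi\|$ holds uniformly in the parameter $t$, equivalently uniformly over the stratum $Y$. Once this uniformity is secured, the remaining steps are routine, the smoothness of $X$ and the identity $\Psi^{2}+(\partial_t\Psi)^{2}=\|\psi\|^{2}$ being elementary consequences of the isolated-singularity assumption and the definition of the Seade family.
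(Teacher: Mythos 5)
The paper itself offers no proof of this theorem: it is imported verbatim from \cite{rs} as a known result, so there is no in-paper argument to compare yours against. Judged on its own merits, your proof is essentially correct and follows what is in substance the strategy of the cited reference: reduce Verdier's condition, via $T_{(0,t)}Y=\mathbb{R}\,e_{m+1}$ and $T_{(x,t)}X=(\nabla\Psi)^{\perp}$, to the estimate $|\partial_{\theta}\Psi|\le C\|x\|\,\|\nabla\Psi\|$ on $X$; use the identity $\Psi^{2}+(\partial_{\theta}\Psi)^{2}=\|\psi\|^{2}$ to rewrite this (for $\|x\|$ small) as $\|\psi(x)\|\le C'\|x\|\,\|\nabla_{x}\Psi_{\theta}(x)\|$; derive from Jacquemard's conditions (A) and (B) the lower bound $\|\nabla_{x}\Psi_{\theta}(x)\|\ge c\,(\|\nabla P(x)\|+\|\nabla Q(x)\|)$ with $c$ independent of $\theta$; and close with $\|\psi(x)\|\le C\|x\|(\|\nabla P(x)\|+\|\nabla Q(x)\|)$. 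Two points should be tightened. First, that last estimate is precisely the classical Bochnak--{\L}ojasiewicz inequality; it is true, but your phrase ``the curve selection lemma upgrades this to a uniform bound'' conceals the standard graph trick --- boundedness of the ratio along each individual arc does not by itself yield uniformity, and one must instead apply the curve selection lemma to a set such as $\{(x,z):\ z^{2}\|\psi(x)\|^{2}=\|x\|^{2}\|D\psi(x)\|^{2},\ z>0\}$ accumulating at the origin and contradict the order count along the resulting arc; either carry this out or simply cite the inequality. Second, in the real-analytic category the asserted equivalence between $\overline{J(P)}=\overline{J(Q)}$ and $\|\nabla P\|\asymp\|\nabla Q\|$ is delicate (real versus algebraic integral closure); fortunately only the implication from integral dependence to the metric comparison is used, and that direction is elementary and uniform on a neighbourhood, so the constant in your lower bound for $\|\nabla_{x}\Psi_{\theta}\|$ is indeed independent of $\theta$ as required.
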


It is well known that, in real and complex subanalytic
settings we have these sequences of implications: Verdier's
$(w)-$condition $\Rightarrow$ Kuo's ratio test $\Rightarrow$
$(b)-$ Whitney condition $\Rightarrow$ Bekka's $(c)-$ condition.

\begin{theorem} \label{T1} \cite{rs}  If the pair $(X,Y)$, as above, satisfy the Bekka's
$(c)-$ condition on $\mathbb{R}^{m}\times \mathbb{R}$ with respect the control function
$\rho(x,\theta)=\Sigma_{i=1}^{n}x^{2}_{i}$(or, in another words,
the pair $(X,Y)$ satisfies the $A_{\rho}$-Thom condition), then
the map $ \displaystyle{\frac{\psi}{\|\psi \|}}$
extends as a smooth projection of locally trivial fibre bundle
$S^{m-1}_{\epsilon}\setminus {K}_{\epsilon} \,\to \, S^{1}$.

\end{theorem}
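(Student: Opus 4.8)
The plan is to realise $\phi=\psi/\|\psi\|$ as (the positive half of) the $\theta$-projection of Seade's family, and to extract from the $A_\rho$-Thom condition the one geometric fact that drives everything: for all small $\epsilon$ the spheres $S^{m-1}_\epsilon$ are transverse to the big stratum $X$. Write $\Psi(x,\theta)=\langle\psi(x),(-\sin\theta,\cos\theta)\rangle$, $g_\theta=\Psi(\cdot,\theta)$, and $V_\theta=g_\theta^{-1}(0)$; then $V_\theta\cap S^{m-1}_\epsilon$ is a ``page'', the link $K_\epsilon=\{\psi=0\}\cap S^{m-1}_\epsilon$ lies in every page (since $\psi=0$ forces $g_\theta=0$ for all $\theta$), and $\phi^{-1}(\theta)$ is exactly the ``positive half'' $\{x\in V_\theta\cap S^{m-1}_\epsilon: \langle\psi(x),(\cos\theta,\sin\theta)\rangle>0\}$ of that page. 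Equivalently, $x\mapsto(x,\phi(x))$ identifies $S^{m-1}_\epsilon\setminus K_\epsilon$ with the positive part of $V\cap(S^{m-1}_\epsilon\times S^1)$ and carries $\phi$ to the projection $\pi\colon(x,\theta)\mapsto\theta$; so it suffices to prove that $\pi$ is a locally trivial fibration respecting the binding $K_\epsilon\times S^1$.

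First I would prove the transversality. Off the link, $\partial_\theta\Psi=-\langle\psi,(\cos\theta,\sin\theta)\rangle\ne0$ on $V$, and a short computation shows that $x$ is a critical point of $\phi|_{S^{m-1}_\epsilon}$ precisely when $\nabla_x\Psi(x,\theta)$ is radial, i.e. when $S^{m-1}_\epsilon$ is tangent to $V_\theta$. Suppose no uniform $\epsilon_0$ worked; choose tangency points $(x_n,\theta_n)\in X$ with $x_n\to0$ and $\nabla_x\Psi(x_n,\theta_n)=\lambda_n x_n$. Along such a sequence the equation $\langle\nabla_x\Psi,v_x\rangle+\partial_\theta\Psi\,v_\theta=0$ defining $TX$, intersected with $\langle x,v_x\rangle=0$, forces $v_\theta=0$; hence $\ker(d\rho|_X)\subseteq\mathbb{R}^m\times\{0\}$ at every $x_n$, and any limit $\tau$ of these spaces misses $\partial_\theta$. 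This contradicts the $(a_\rho)$ (i.e. $A_\rho$-Thom) condition, which requires $T_yY=\mathbb{R}\,\partial_\theta\subseteq\tau$. Therefore, for all small $\epsilon$, the map $\phi$ has no critical point on $S^{m-1}_\epsilon\setminus K_\epsilon$, and each page is smooth away from the binding.

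At the binding the argument above is silent because $\partial_\theta\Psi=0$ there, so I would secure the link transversality separately: smoothness of $V\cap(S^{m-1}_\epsilon\times S^1)$ at a link point is equivalent to $S^{m-1}_\epsilon\pitchfork\psi^{-1}(0)$, which holds for small $\epsilon$ by the classical fact that small spheres meet $\psi^{-1}(0)$ transversally (\cite{mi}) under the isolated-singularity hypothesis. With transversality in hand, $V\cap(S^{m-1}_\epsilon\times S^1)$ is a compact smooth manifold lying in the single regular stratum $X$, and $\pi$ is a proper submersion; Ehresmann's theorem (equivalently Bekka's first isotopy lemma, whose validity is exactly what $(c)$-regularity guarantees) makes $\pi$ locally trivial. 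Choosing the lift of $\partial_\theta$ tangent to the product sub-bundle $K_\epsilon\times S^1$, the flow cannot cross the binding and so preserves the sign of $\langle\psi,(\cos\theta,\sin\theta)\rangle$; restricting the trivialisation to the (clopen, off the binding) positive half then trivialises $\phi$ over each short arc of $S^1$. Since $\epsilon\le\epsilon_0$ is arbitrary, $\psi$ satisfies the Strong Milnor condition.

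The step I expect to fight hardest for is the behaviour at the binding: the pages are non-compact once $K_\epsilon$ is removed, so submersivity of $\phi$ alone is not enough, and one must (i) know that $V\cap(S^{m-1}_\epsilon\times S^1)$ is genuinely smooth across $K_\epsilon\times S^1$ and (ii) build the trivialising vector field tangent there so that half-pages go to half-pages. This is exactly where $(c)$-regularity with respect to $\rho$---rather than a pointwise transversality valid for a single $\epsilon$---earns its keep: it delivers the transversality uniformly as $\epsilon\to0$ together with the controlled tube structure needed to keep the lift of $\partial_\theta$ tangent to the small stratum and complete. Verifying that the positive-half condition is preserved globally around $S^1$, and matching the two antipodal halves of each page, is the remaining bookkeeping.
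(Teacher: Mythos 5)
Your proposal is correct in its essentials, but it is worth noting that the paper does not actually prove this statement: Theorem \ref{T1} is imported from \cite{rs}, and the paper's own machinery for results of this type is the proof of Theorem \ref{mt} (Lemmas \ref{L1}, \ref{L2} and Proposition \ref{P1}) together with the very terse Corollary 5.1. That machinery follows Milnor and Jacquemard: characterize the critical points of $\psi/\|\psi\|$ on $S^{m-1}_\epsilon\setminus K_\epsilon$ via the field $\gamma=P\nabla Q-Q\nabla P$ (Lemma \ref{L1} identifies $\gamma$ with $\nabla_x\Psi_\theta$ up to scale), build an explicit tangent vector field $\omega$ with $\langle\omega,\gamma/\|\psi\|^2\rangle=1$ and $|\langle\omega,\nabla(\|\psi\|^2)/\|\psi\|^2\rangle|\le M$ --- the bound coming from a curve selection lemma estimate --- and integrate, the bound guaranteeing that the flow on the \emph{non-compact} fibres never reaches $K_\epsilon$ in finite time. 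You instead compactify: you pass to the open book $W_\epsilon=\Psi^{-1}(0)\cap(S^{m-1}_\epsilon\times S^1)$, extract from condition $iii)$ of $(c)$-regularity the uniform transversality of $X$ to small spheres (your kernel computation $\ker(d\rho|_X)\subset\mathbb{R}^m\times\{0\}$ at a tangency point, which forces $\partial_\theta\notin\tau$, is exactly the implication $(c)\Rightarrow(m)$ that the paper's Corollary 5.1 invokes from \cite{kbsk}), secure smoothness across the binding from Milnor's transversality of small spheres to $\psi^{-1}(0)$, and then apply Ehresmann with a horizontal lift kept tangent to $K_\epsilon\times S^1$. What each route buys: yours avoids the curve-selection estimate entirely --- properness of $\pi$ on the compact $W_\epsilon$ replaces the completeness-of-flow argument, and for fixed $\epsilon$ you do not even need the uniform angle bound $1-\rho$ that drives Lemma \ref{L2} --- while the paper's route produces an explicit geometric monodromy and works directly on the open fibres. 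The points you flag as delicate (smoothness of $W_\epsilon$ along the binding, and the sign-preservation that sends positive half-pages to positive half-pages) are genuinely the right ones, and your treatment of them is sound; the only details I would ask you to write out are the computation identifying critical points of $\psi/\|\psi\|$ with radiality of $\nabla_x\Psi_\theta$ (the content of Lemma \ref{L1} plus Jacquemard's criterion) and the compactness/subsequence step in the Grassmannian before invoking condition $iii)$.
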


In the example below, is easy to see that, the pair $(X,Y)$
associated to Seade's family satisfies the $A_{\rho}-$Thom
condition for $\rho(x,y,\theta)=x^{2}+y^{2}$ and $\rho_{\mid_{X}}$
is a submersion, i.e. the pair $(X,Y)$ satisfies $(c)-$regularity
condition (see definition \ref{DefBK}), but does not satisfy the
Jacquemard hypothesis((B), in this case), showing that the
Jacquemard hypotheses is stronger than the hypothesis given by the
authors in Theorem \ref{T1}.

\begin{example}
\begin{equation*}
\begin{cases}
P=xy \\
Q=x^{2}-y^{4}
\end{cases}
\end{equation*}
\end{example}

\hspace{1cm}

In this work, still using the family
$\displaystyle{\Psi(x,\theta)}$, we give another point of view to
get the Strong Milnor fibration. Actually, following the approach
given by Milnor in \cite{mi} and by A. Jacquemard in \cite{ja}, we
des-cribe a condition weaker than $(c)-$regularity, as given in
\cite{rs}. The main Theorem is:

\begin{theorem}\label{mt}
Let $\psi =(P,Q):(\mathbb R^m ,0) \to (\mathbb R^2 ,0)\ $ be a
real analytic map germ, with isolated critical point at $0\in
\mathbb{R}^{m}$, and $\Psi(x,\theta)$ the associated Seade's
family for the map $\psi $. Suppose that for all
$x\in U\setminus \{0\}$, where $U$ is an open domain of $\psi$, we have:\\
$\displaystyle{\mid\langle\frac{\nabla_{x}\Psi_{\theta}(x)}{\|\nabla_{x}\Psi_{\theta}(x)\|},
\frac{x}{\|x\|} \rangle\mid \leq 1-\rho }$; $0<\rho \leq 1$;
$\forall \theta\in \mathbb{R}$. Then, $\psi$ satisfies the Strong
Milnor condition at $0\in \mathbb{R}^{m},$ i.e. there exist
$\epsilon_{0}>0$, sufficiently small, such that $\forall
\epsilon$, $0<\epsilon \leq \epsilon_{0}$, the projection map
$\displaystyle{\frac{\psi}{\|\psi\|}:{S_{\epsilon}^{m-1}}\setminus
K_{\epsilon} \to {S^{1}}}$ is a smooth locally
trivial fibre bundle.
\end{theorem}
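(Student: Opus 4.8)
The plan is to follow the Milnor--Jacquemard scheme: realise $\phi=\psi/\|\psi\|$ as an $S^1$-valued submersion on each punctured sphere and then trivialise it by integrating a vector field that lifts the unit rotation field $\partial_\theta$ on $S^1$. Concretely, fix $\epsilon_0>0$ with $\overline{B^m_{\epsilon_0}(0)}\subset U$, and for $0<\epsilon\le\epsilon_0$ work on $S^{m-1}_\epsilon\setminus K_\epsilon$. Writing $u_\theta=(\cos\theta,\sin\theta)$, the fibre $\phi^{-1}(u_\theta)$ consists of those $x$ with $\psi(x)$ a positive multiple of $u_\theta$; these fibres are one half of the level set $\Psi_\theta^{-1}(0)\cap S^{m-1}_\epsilon$, so the whole problem is governed by the varieties $\Psi_\theta^{-1}(0)$ and their intersection with the spheres. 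Since $S^1$ is one-dimensional, it suffices to produce (i) a proof that $\phi$ restricted to $S^{m-1}_\epsilon\setminus K_\epsilon$ is a submersion and (ii) a complete lift of $\partial_\theta$; local triviality then follows by pushing fibres along the flow.

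For (i) I would use the elementary identity $\nabla_x\phi=-\,\nabla_x\Psi_{\phi(x)}/\|\psi(x)\|$ (up to the normalisation fixed in Definition~2.4), so that the sphere-tangential part of $\nabla_x\phi$ is a nonzero scalar multiple of the sphere-tangential part of $\nabla_x\Psi_\theta$. The hypothesis $|\langle \nabla_x\Psi_\theta/\|\nabla_x\Psi_\theta\|,\,x/\|x\|\rangle|\le 1-\rho$ says exactly that the unit normal to $\Psi_\theta^{-1}(0)$ never comes within angle $\arccos(1-\rho)$ of the radial direction, so its tangential component has norm at least $\sqrt{2\rho-\rho^2}\,\|\nabla_x\Psi_\theta\|>0$. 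This gives two things at once: every small sphere is transverse to every $\Psi_\theta^{-1}(0)\setminus\{0\}$ (so the fibres are smooth $(m-2)$-manifolds), and $d(\phi|_{S_\epsilon})$ is surjective everywhere on $S^{m-1}_\epsilon\setminus K_\epsilon$. Crucially, because the bound $1-\rho$ is \emph{uniform} in $x$ and $\theta$, the same $\epsilon_0$ works for all $\epsilon\le\epsilon_0$, which is what the statement asserts.

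For (ii), let $g(x)$ be the orthogonal projection of $\nabla_x\phi$ onto $T_xS^{m-1}_\epsilon$ and set $w=g/\|g\|^2$; then $w$ is tangent to the sphere, smooth on $S^{m-1}_\epsilon\setminus K_\epsilon$ (since $g\ne0$ by Step (i)), and $d\phi(w)=1$, so $w$ lifts $\partial_\theta$. The main obstacle is \emph{completeness}: the fibres are non-compact and accumulate on the link $K_\epsilon=\{\|\psi\|=0\}$, so a priori a trajectory of $w$ could run into $K_\epsilon$ in finite time. I would control this by estimating $\frac{d}{dt}\log\|\psi\|=\langle\nabla_x\log\|\psi\|,\,w\rangle$ along trajectories: the tangential lower bound $\|g\|\ge c\,\|\nabla_x\Psi_\theta\|/\|\psi\|$ coming from the $\rho$-condition forces $\|w\|\le C\|\psi\|$ on the compact sphere, which cancels the $\|\psi\|^{-1}$ growth of $\nabla_x\log\|\psi\|$ and makes $|\frac{d}{dt}\log\|\psi\||$ bounded; hence $\|\psi\|$ cannot reach $0$ in finite time. (Alternatively, in the real-analytic setting one argues by contradiction with the Curve Selection Lemma: a trajectory limiting onto $K_\epsilon$ would produce an analytic arc along which $\Psi_\theta^{-1}(0)$ is asymptotically tangent to a sphere, contradicting the uniform transversality.) I expect this completeness estimate to be the heart of the proof, since it is precisely where the quantitative gap $\rho>0$, rather than a merely pointwise strict inequality, is indispensable.

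Once $w$ is complete, its flow $\Phi_t$ satisfies $\phi\circ\Phi_t=\phi+t$, so $\Phi_t$ carries $\phi^{-1}(u_{\theta_0})$ diffeomorphically onto $\phi^{-1}(u_{\theta_0+t})$; for an arc $I\subset S^1$ the map $(t,y)\mapsto\Phi_t(y)$ is then a diffeomorphism $I\times\phi^{-1}(u_{\theta_0})\xrightarrow{\ \sim\ }\phi^{-1}(I)$ commuting with $\phi$, i.e.\ a local trivialisation. This establishes that $\phi:S^{m-1}_\epsilon\setminus K_\epsilon\to S^1$ is a smooth locally trivial fibre bundle for every $\epsilon\le\epsilon_0$, which is the Strong Milnor condition.
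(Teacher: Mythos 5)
Your outline reproduces the paper's argument almost step for step: the submersion claim comes from relating the sphere--tangential part of $\nabla_x\phi$ to $\nabla_x\Psi_\theta$ (the paper does this via Lemma~3.1, which shows $\gamma(x)=P\nabla Q-Q\nabla P$ is parallel to $\nabla_x\Psi_\theta(x)$, combined with Jacquemard's characterization of the critical points of $\psi/\|\psi\|$); your lift $w=g/\|g\|^2$ is literally the paper's vector field $\omega=\bigl(\|\psi\|^2/\|u\|\bigr)\,u/\|u\|$ with $u$ the tangential projection of $\gamma$, since $g=u/\|\psi\|^2$; and completeness is obtained, as in Proposition~4.1, by showing $\frac{d}{dt}\log\|\psi(\phi_t)\|$ is bounded so the flow cannot reach $K_\epsilon$ in finite time, after which the flow trivialises the bundle.

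The one place you diverge is also the one place where the argument is not yet a proof. You correctly identify completeness as the crux, but the inequality you need --- in the paper's notation, that $\|\nabla(\|\psi\|^2)\|/\|\gamma\|$ stays bounded as $x$ approaches $K_\epsilon$, equivalently that $\|w\|\le C\|\psi\|$ --- is asserted (``forces $\|w\|\le C\|\psi\|$ on the compact sphere'') rather than derived; note that this bound does \emph{not} follow from the $\rho$-hypothesis alone, since $\gamma$ could a priori vanish to higher order than $\|\psi\|$ along the link. The paper spends the second half of Lemma~3.2 on exactly this point: it applies the Curve Selection Lemma to an analytic arc landing on $K_\epsilon$, expands $P$, $Q$, $\nabla P$, $\nabla Q$ in Puiseux/Taylor series, and shows that a blow-up of the ratio would force $\nabla P$ and $\nabla Q$ to be parallel at a point of the link, contradicting the isolated critical point. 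Your parenthetical alternative (a trajectory limiting onto $K_\epsilon$ giving an analytic arc contradicting transversality) does not work as stated: flow trajectories are not analytic arcs, and the obstruction is not tangency of $\Psi_\theta^{-1}(0)$ to spheres but degeneration of $\gamma$ relative to $\|\psi\|$. Your direct route can, however, be completed without the Curve Selection Lemma: writing $(P,Q)=\|\psi\|(\cos\alpha,\sin\alpha)$ gives $\gamma=\|\psi\|\bigl(\cos\alpha\,\nabla Q-\sin\alpha\,\nabla P\bigr)$, and since $\nabla P(x)$, $\nabla Q(x)$ are linearly independent for $x\in S^{m-1}_\epsilon$ (isolated critical point), compactness of $S^{m-1}_\epsilon\times[0,2\pi]$ yields $\|\gamma\|\ge c\,\|\psi\|$, while $\|\nabla(\|\psi\|^2)\|\le 2\|\psi\|(\|\nabla P\|+\|\nabla Q\|)$; this supplies the missing uniform bound and is arguably cleaner than the paper's arc argument. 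Either way, you must state and prove this lower bound explicitly --- it is the heart of Lemma~3.2 and is where the isolated-singularity hypothesis (as opposed to the $\rho$-transversality hypothesis) enters.
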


\section{Definition and Basic results}

We briefly recall some definitions and basic results. For more
details see \cite{kbsk}, \cite{kb1}.

Let $M$ be a smooth Riemannian manifold, $X$ and $Y$
submanifolds of $M$, such that $Y\subset \overline{X}$. Let
$(T_Y,\pi ,\rho )$ be a tubular neighbourhood of $Y$ in $M$
together with a projection $\pi : T_Y \to Y,$ associated to a
smooth non-negative control function $\rho$ with $\rho ^{-1}(0)=Y$
and $\nabla \rho(x)\in \ker(d \pi (x))$.

\begin{definition}\cite{kbsk}:

The pair $(X,Y)$ satisfies condition (m) if there exists a real
number $\epsilon> 0$ such that

\begin{align*}
(\pi ,\rho )\mid _{X \cap T^{\epsilon}_{Y}}:&X \cap T^{\epsilon}_{Y} \to Y \times \mathbb{R} \\
&x\longmapsto (\pi (x), \rho (x))
\end{align*}

\noindent is a submersion, where $T^{\epsilon}_{Y}:=\{x\in T_{Y}/
\rho (x)< \epsilon \}$.

\end{definition}

Geometrically, the condition (m) says that the submanifold $X$ is
transverse to level $\rho =c$ inside the open tubular neighborhood
$T^{\epsilon}_{Y}$.

\begin{definition} (Bekka's condition) \cite{kb2} \label{DefBK}

We say that a pair of strata $(X,Y)$ satisfies $(c)-$ regularity
condition with respect a smooth non-negative control function
$\rho:M\to \mathbb{R}^{+}$, if the following holds:

\begin{itemize}

\item [i)] $\rho^{-1}(0)=Y;$

\item [ii)] $\rho \mid_{X}$ is a submersion;

\item [iii)] Let $\{x_{i} \}$ a sequence of points in $X$ such
that $\{ x_{i} \} \rightarrow y \in Y$ and
$\ker(d_{x_{i}}\rho\mid_{X})\rightarrow \tau ;$ then
$T_{y}Y\subset \tau .$

\end{itemize}

Considering $Star(Y)=\{X: X~is ~statum ~ such
~that~\overline{Y}\subset X \}$, in a general way, the property
$ii)$ of definition says that $\rho\mid_{Star(Y)}$ is a stratified
map and $iii)$ says that $\rho\mid_{Star(Y)}$ is a Thom map. The
following proposition is indeed a geometric easy way to see the item
$iii)$.

\newpage

\begin{proposition}\cite{kb2}
The property $iii)$ of the above definition is equivalent to

\begin{center}

$\displaystyle{\lim_{x\rightarrow y}
\Pi_{Y}(\frac{grad_{x}(\rho\mid _{X})}{\| grad_{x}(\rho\mid _{X})
\|})=0}$

\end{center}

where $\Pi_{Y}$ is the orthogonal projection on $T_{y}Y$and ${\bf
0}\in T_{y}Y.$

\end{proposition}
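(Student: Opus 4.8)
The plan is to reduce both sides to a statement about a single unit vector and then compare limits in the relevant Grassmannians. Write $v_x = grad_x(\rho\mid_X)/\|grad_x(\rho\mid_X)\|$. Since $\rho\mid_X$ is a submersion (property $ii)$), at each $x\in X$ the gradient is nonzero and $T_xX$ splits orthogonally as $T_xX = \ker(d_x\rho\mid_X)\oplus\mathbb{R}v_x$; thus $v_x$ is exactly the unit normal to $\ker(d_x\rho\mid_X)$ inside $T_xX$. Fixing a chart around $y$ so that the tangent spaces $T_xM$ for $x$ near $y$ are all regarded as subspaces of a fixed $\mathbb{R}^N$, I would take any sequence $x_i\to y$ and, using compactness of the Grassmannians, pass to a subsequence along which $v_{x_i}\to v$, $\ker(d_{x_i}\rho\mid_X)\to\tau$ and $T_{x_i}X\to T$. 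Letting $i\to\infty$ in the relations $v_{x_i}\perp\ker(d_{x_i}\rho\mid_X)$ gives $v\perp\tau$ and hence $T=\tau\oplus\mathbb{R}v$, while $\ker(d_{x_i}\rho\mid_X)\subset T_{x_i}X$ passes to $\tau\subset T$.

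With this dictionary, the implication $iii)\Rightarrow$(limit) is a short contradiction argument requiring no extra input. If $\Pi_Y(v_x)$ failed to tend to $0$, I could extract a subsequence as above with $\Pi_Y(v)\neq 0$, hence a vector $w\in T_yY$ with $\langle v,w\rangle\neq 0$; but $iii)$ forces $T_yY\subset\tau$, so $w\in\tau$, and then $v\perp\tau$ gives $\langle v,w\rangle=0$, a contradiction. The same computation shows en passant that $iii)$ already entails $T_yY\subset T$, i.e. the Whitney $(a)$-type condition, consistent with the implications recalled before Theorem~\ref{T1}. For the converse I would start from $\Pi_Y(v)=0$, that is $v\perp T_yY$. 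Given $w\in T_yY$, decomposing $w = w_\tau + cv + w^\perp$ with $w_\tau\in\tau$ and $w^\perp\perp T$, the relation $\langle v,w\rangle=0$ forces $c=0$, so the $T$-component of $w$ lies in $\tau$; it then remains only to show that $w$ has no component $w^\perp$ orthogonal to $T$.

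This last point is where I expect the real work to be: controlling the part of $T_yY$ that is normal to $T=\lim T_{x_i}X$. The vector $v_x$ only detects the direction of $T_xX$ transverse to $\ker(d_x\rho\mid_X)$, so the hypothesis $\Pi_Y(v_x)\to 0$ governs the $\langle\,\cdot\,,v\rangle$ component directly but says nothing a priori about the ambient normal directions of $T_{x_i}X$. The lever I would use is the structural property of the control function, namely $\nabla\rho(x)\in\ker(d\pi(x))$ with $\rho^{-1}(0)=Y$: this makes $\nabla\rho$ asymptotically normal to $Y$, so that $\Pi_Y(grad_x(\rho\mid_X))$ is, up to normalization, a combination of the quantities $\langle\nabla\rho,n_j\rangle\,\Pi_Y(n_j)$ taken over an orthonormal frame $\{n_j\}$ of the normal space to $X$; squeezing these to zero is what yields $T_yY\subset T$ and upgrades the decomposition above to $T_yY\subset\tau$. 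I therefore anticipate that the crux of the write-up is the careful bookkeeping of the three nested spaces $\tau\subset T\subset\mathbb{R}^N$ together with the verification that all the convergences are taken in the correct Grassmannian, the remaining algebra being routine.
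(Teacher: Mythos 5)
The paper offers no proof of this proposition at all --- it is quoted verbatim from Bekka \cite{kb2} --- so there is no in-paper argument to measure yours against; I can only assess the proposal on its own terms. Your forward direction is correct and complete: $v_{x_i}\perp\ker(d_{x_i}\rho\mid_X)$ passes to $v\perp\tau$ in the limit, condition $iii)$ puts $T_yY$ inside $\tau$, hence $\Pi_Y(v)=0$, and the subsequence/compactness bookkeeping in the Grassmannians is exactly right. Your side remark that $iii)$ entails $T_yY\subset T=\lim T_{x_i}X$ is also correct, since $\tau\subset T$.

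The converse is where the problem lies, and although you have located the missing step precisely --- one must show $T_yY\subset T$, i.e.\ Whitney $(a)$, before the orthogonal decomposition $w=w_\tau+cv+w^{\perp}$ can finish --- the lever you propose ($\nabla\rho(x)\in\ker(d\pi(x))$ and $\rho^{-1}(0)=Y$) cannot close the gap, because the implication fails from the stated hypotheses alone. Concretely: take $M=\mathbb{R}^3$, $Y$ the $t$-axis, $\pi(x,y,t)=(0,0,t)$, $\rho=x^2+y^2$, and $X=\{(\sqrt{t}\,y,\,y,\,t):\ t>0,\ y\neq 0\}$, a smooth sheet of the Whitney umbrella $x^2=ty^2$ accumulating on $Y$ at the origin. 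Since $x^2-ty^2$ is homogeneous of degree $2$ in $(x,y)$, the field $\nabla\rho=2(x\partial_x+y\partial_y)$ is everywhere tangent to $X$, so $grad_x(\rho\mid_X)=\nabla\rho(x)=2y(\sqrt{t},1,0)$ and $\Pi_Y$ of its normalization is \emph{identically} zero; yet along the sequence $y_i=\sqrt{t_i}\to 0$ the kernels $\ker(d_{x_i}\rho\mid_X)$ converge to the line spanned by $(1,0,2)$, which does not contain $T_0Y$. So the gradient-limit condition holds while $iii)$ fails. The statement that is actually provable (and is how the result appears in Bekka--Koike \cite{kbsk}) is that $iii)$ is equivalent to Whitney $(a)$ \emph{together with} the gradient-limit condition; the version quoted here presupposes $(a)$-regularity of the pair. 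Your write-up should therefore either add $(a)$ as a hypothesis --- after which your decomposition immediately gives $w^{\perp}=0$ and the converse is done --- or accept that no amount of manipulation of $grad_x(\rho\mid_X)$, which records only one tangential direction of $X$ and is blind to how $T_{x_i}X$ tilts in the ambient space, can produce $(a)$.
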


Now let $F:\mathbb{R}^{m}\times \mathbb{R},0\times \mathbb{R}\to
\mathbb{R},0$ be a one-parameter family of function-germs,
$F(x,\theta)=F_{\theta}(x)$, $X:=F^{-1}(0)\setminus (0\times
\mathbb{R})\subset \mathbb{R}^{m}\times \mathbb{R}$, $Y:=0\times
\mathbb{R}$ and $X_{\theta}=F_{\theta}^{-1}(0)\subset
\mathbb{R}^{m},0$.

We say that the family $F(x,\theta)$ has $\rho-$Milnor's radius
uniformly, if there exist $\epsilon_{0}>0$ such that the pair
$(X,Y)$, defined as above, satisfies condition (m), with respect
some control function $\rho$.
\end{definition}

\begin{remark}

\item[ 1.] If the definition holds for some $\epsilon_{0}>0$, it
also holds for all $\epsilon $, $0<\epsilon \leq \epsilon_{0};$

\item[ 2.] Using the control function $\rho
(x_{1},...,x_{m},\theta)= \Sigma_{i=1}^{m}x_{i}^{2}$ in
$\mathbb{R}^{m+1}$, this definition says that the strata $X$ is
transverse to all Euclidean cylinder into the tubular neighborhood
$T^{\epsilon_{0}}_{Y}$. More precisely, the manifolds
$X_{\theta}=F_{\theta}^{-1}(0)$ are transverse to spheres
$S_{\epsilon}^{m-1}$, for all $0<\epsilon \leq \epsilon _{0}$.

\end{remark}

Finally we recall Seade's method given in \cite{s2}, \cite{rs}.
Consider a real analytic map germ $\psi :\mathbb {R}^m,0 \to
\mathbb {R}^{2},0$ and identify $\mathbb{R}^{2}$ with
$\mathbb{C}$, we have $\psi(x)=(P(x),Q(x)) \approx P(x)+iQ(x)$,
where $i^{2}=-1$. Let $\pi_{\theta}: \mathbb{C} \to {L}_{\theta}$
be the orthogonal projection to the line ${L}_{\theta}$ through
the origin, forming angle $\theta$ with the horizontal axis in
$\mathbb C$ and take the composition $\Psi(x,\theta)= \pi_{\theta}
\circ\psi (x)$.

\begin{lemma}\label{L3}\cite{s2}
 Let $U\subseteq \mathbb{R}^{m}$ be a neighborhood
 of $0$ such that for every $x\in U\setminus \{0\}$, $\psi$ has maximal rank at $x.$
 Then the following hold:

 \begin{itemize}

 \item [(i)] $U=\cup _{\theta} (M_{\theta}\cap U)$, $0\leq \theta <\pi $ .

 \item [(ii)] $M=\cap_{\theta} M_{\theta} = M_{\theta _{1}} \cap M_{\theta _{2}}
  $, where $M=\psi ^{-1}(0)$, $\theta _{1}\neq \theta _{2},\theta _{1}, \theta _{2}
  \in [0,\pi)$.

 \item [(iii)] For each $\theta \in [0,\pi),$
  $M_{\theta}=E_{\theta}\cup M \cup E_{\theta + \pi}$,
  where $E_{\alpha }=\widetilde{\phi}^{-1}(e^{i\alpha})$
  and $M=\psi^{-1}(0)$, with $\widetilde{\phi}:U\setminus M \to
{S}^{1}$, $\displaystyle{\widetilde{\phi }(x)=i\frac{\overline
{\psi (x)}}{\|\psi (x)\|}}$.

 \item [(iv)] For each $\theta \in
  [0,\pi)$, $M_{\theta}^{*}=M_{\theta}\setminus \{0\}$ is a real smooth submanifold
   of real codimension $1$ of  $U\setminus \{0\}$, given
  by the union  of
  $E_{\theta},$ $E_{\theta + \frac{\pi}{2}}$ and $M\setminus \{0\}$.
 \end{itemize}
 \end{lemma}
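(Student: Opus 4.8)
The statement to prove is Lemma~\ref{L3}, whose four items are structural facts about the decomposition $U=\bigcup_\theta M_\theta$ arising from Seade's family. The plan is to extract everything from the single underlying observation that, for $x\in U\setminus M$, the point $\widetilde\phi(x)=i\,\overline{\psi(x)}/\|\psi(x)\|$ is a well-defined element of $S^1$, and that $x\in M_\theta$ exactly when the projection $\Psi_\theta(x)=\pi_\theta\circ\psi(x)$ vanishes, i.e.\ when the complex number $\psi(x)$ is orthogonal to the line $L_\theta$. Since $L_\theta$ makes angle $\theta$ with the horizontal axis, orthogonality of $\psi(x)$ to $L_\theta$ means $\psi(x)$ is a nonzero real multiple of $e^{i(\theta+\pi/2)}$; writing this condition in terms of $\widetilde\phi$ is the computational heart of all four items.

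First I would prove (i). Given any $x\in U\setminus\{0\}$, either $x\in M$ (in which case $\psi(x)=0$ and $x$ lies in every $M_\theta$ trivially), or $\psi(x)\neq 0$, in which case $\psi(x)=r\,e^{i\alpha}$ for some $r>0$ and some angle $\alpha$; choosing $\theta$ so that $\theta+\tfrac\pi2\equiv\alpha \pmod\pi$ makes $\psi(x)\perp L_\theta$, hence $\Psi_\theta(x)=0$ and $x\in M_\theta$. This shows $U\subseteq\bigcup_{0\le\theta<\pi}(M_\theta\cap U)$, and the reverse inclusion is immediate. For (ii), the inclusion $M\subseteq\bigcap_\theta M_\theta$ is clear since $\psi(x)=0$ forces all projections to vanish. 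For the converse, if $x\in M_{\theta_1}\cap M_{\theta_2}$ with $\theta_1\neq\theta_2$ in $[0,\pi)$, then $\psi(x)$ is simultaneously orthogonal to two distinct lines through the origin; since two distinct lines in the plane span $\mathbb{R}^2$, the only vector orthogonal to both is $0$, so $\psi(x)=0$ and $x\in M$. This proves $M=M_{\theta_1}\cap M_{\theta_2}=\bigcap_\theta M_\theta$.

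Next I would handle (iii) and (iv), which are the more delicate items. For (iii), fix $\theta$ and take $x\in M_\theta$. If $x\notin M$, then $\psi(x)\neq0$ and $\psi(x)\perp L_\theta$ forces $\psi(x)$ to point along $e^{i(\theta+\pi/2)}$ or along $e^{i(\theta-\pi/2)}=e^{i(\theta+\pi/2)}\cdot(-1)$; translating through $\widetilde\phi(x)=i\,\overline{\psi(x)}/\|\psi(x)\|$ and computing $i\,\overline{e^{i\beta}}=e^{i(\pi/2-\beta)}$, I would verify that these two cases correspond precisely to $\widetilde\phi(x)=e^{i\theta}$ and $\widetilde\phi(x)=e^{i(\theta+\pi)}$, i.e.\ to $x\in E_\theta$ and $x\in E_{\theta+\pi}$ respectively. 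Hence $M_\theta=E_\theta\cup M\cup E_{\theta+\pi}$. Item (iv) then follows by restricting to $U\setminus\{0\}$ and invoking smoothness: since $\psi$ has maximal rank on $U\setminus\{0\}$, the map $\widetilde\phi$ is a submersion there, so each level set $E_\alpha=\widetilde\phi^{-1}(e^{i\alpha})$ is a smooth codimension-one submanifold of $U\setminus\{0\}$, and $M_\theta^*$ is their union with $M\setminus\{0\}$.

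\textbf{Main obstacle.} The routine part is the angle bookkeeping in (iii): one must be careful to track the action of multiplication by $i$ and complex conjugation on arguments so that the two components $E_\theta$ and $E_{\theta+\pi}$ are correctly matched to the two sides of the line $L_\theta$, and to confirm that the index $\theta+\pi/2$ appearing in the statement of (iv) is consistent with the $\theta+\pi$ appearing in (iii) (this is a matter of normalizing which angle labels which projection direction). The genuinely substantive point is establishing the smooth-submanifold claim in (iv): this is where the hypothesis that $\psi$ has maximal rank on $U\setminus\{0\}$ is essential, since it guarantees $\widetilde\phi$ is a submersion away from the origin and hence that its fibers are regular level sets. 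I expect the verification that $\widetilde\phi$ is a submersion on $U\setminus M$ to be the step requiring the most care, as it must use both the maximal-rank hypothesis and the explicit form $\widetilde\phi(x)=i\,\overline{\psi(x)}/\|\psi(x)\|$.
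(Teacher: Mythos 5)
The paper itself offers no proof of this lemma; it is quoted from Seade's paper \cite{s2}, so there is no in-house argument to compare against. On its own merits, your treatment of (i)--(iii) is sound: the characterization of $M_\theta$ as the locus where $\psi(x)$ is orthogonal to the relevant line, the observation that a vector orthogonal to two distinct lines through the origin must vanish (which gives (ii)), and the bookkeeping identity $i\,\overline{e^{i\beta}}=e^{i(\pi/2-\beta)}$ splitting $M_\theta\setminus M$ into the two fibres $E_\theta$ and $E_{\theta+\pi}$ are exactly what is needed. The sign and orientation ambiguities you flag (including the evident inconsistency between $E_{\theta+\pi/2}$ in item (iv) and $E_{\theta+\pi}$ in item (iii)) are indeed only matters of normalization, the more so since the paper actually projects onto $L_{-\theta}$.

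The gap is in (iv). Showing that each $E_\alpha=\widetilde\phi^{-1}(e^{i\alpha})$ is a smooth codimension-one submanifold of $U\setminus M$ does not show that the union $E_\theta\cup(M\setminus\{0\})\cup E_{\theta+\pi}$ is a smooth codimension-one submanifold of $U\setminus\{0\}$: the middle piece $M\setminus\{0\}=\psi^{-1}(0)\setminus\{0\}$ has codimension two, $\widetilde\phi$ is not even defined there, and smoothness of $M_\theta^*$ at exactly those points is the nontrivial content of the claim; your argument never reaches them. The correct route is to bypass $\widetilde\phi$ and use the defining equation: $M_\theta^*=\Psi_\theta^{-1}(0)\cap(U\setminus\{0\})$ with $\Psi_\theta(x)=\cos\theta\,P(x)-\sin\theta\,Q(x)$, whose gradient $\cos\theta\,\nabla P(x)-\sin\theta\,\nabla Q(x)$ is a nontrivial linear combination of $\nabla P(x)$ and $\nabla Q(x)$, hence nonzero wherever $D\psi(x)$ has rank $2$, i.e. on all of $U\setminus\{0\}$. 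Thus $M_\theta^*$ is a regular level set of a single real-valued submersion, which gives codimension-one smoothness everywhere at once, including along $M\setminus\{0\}$; the decomposition into $E_\theta$, $E_{\theta+\pi}$ and $M\setminus\{0\}$ then follows from your item (iii).
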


\begin{definition}

The family $\Psi :(\mathbb{R}^{m}\times \mathbb{R},0)\to
(\mathbb{R},0)$, $\Psi(x,\theta)=\pi_{\theta}\circ \psi(x)$,
defined as above, will be called the Seade's family associated to the map
germ $\psi =(P,Q)$.

\end{definition}

\section{Tools}

In what follows let $\psi=(P,Q):\mathbb{R}^{m},0\to
\mathbb{R}^{2},0$ be a real analytic map germ with isolated
critical point at $0\in \mathbb{R}^{m}$, $U \ni 0$ some open
domain of $\displaystyle{\psi}$ with the decomposition given by
Lemma \ref{L3} and $\Psi:\mathbb{R}^{m}\times \mathbb{R},0 \to
\mathbb{R},0$ the associated Seade's family to the real analytic
map germ $\psi$, satisfying the hypothesis of Theorem\ref{mt}.

\vspace{0.2cm}

In this section we prove two preliminary lemmas. In the first
we show that the projection $\displaystyle{\frac{\psi}{\|\psi \|}}$
does not have critical point in $S_{\epsilon}^{m-1}\setminus
K_{\epsilon}$ for all $\epsilon $ sufficiently small. Using the hypothesis of Theorem\ref{mt},
it will be possible to guarantee the existence of $\epsilon_{0}>0
$ sufficiently small, such that the manifolds $M_{\theta}\setminus
\{0\}$ are transverse to $S_{\epsilon}^{m-1}$ for all $\epsilon,
0<\epsilon \leq \epsilon_{0}$, and for all $\theta $.

\begin{lemma}\label{L1}

There exists $\epsilon_{0}>0$, sufficiently small, such that for
all $ x\in B_{\epsilon_{0}}^{m}(0)\setminus
\{\psi^{-1}(0)\}\subset U,$ the vector fields
$\nabla_{x}\Psi_{\theta}(x)$ and $\gamma (x):=P(x)\nabla
Q(x)-Q(x)\nabla P(x)$ are parallel.

\end{lemma}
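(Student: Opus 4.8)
The plan is to reduce the statement to an explicit computation of $\nabla_x\Psi_\theta$ combined with the geometric meaning of the angle $\theta$ attached to a given point $x$. First I would put the Seade family in coordinates. Identifying $\mathbb{R}^2\cong\mathbb{C}$ and parametrizing the line $L_\theta$ by the unit vector $(\cos\theta,\sin\theta)$, the orthogonal projection $\pi_\theta\circ\psi$ is the real-valued function
\[ \Psi_\theta(x)=P(x)\cos\theta+Q(x)\sin\theta, \]
so that
\[ \nabla_x\Psi_\theta(x)=\cos\theta\,\nabla P(x)+\sin\theta\,\nabla Q(x). \]

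Next I would fix $x\in B^m_{\epsilon_0}(0)\setminus\psi^{-1}(0)$. By Lemma~\ref{L3}(i) there is an angle $\theta$ with $x\in M_\theta$, i.e.\ $\Psi_\theta(x)=0$; since $\psi(x)\neq 0$ this is the angle relevant to $x$, and it is the parameter appearing in the statement. The relation $P(x)\cos\theta+Q(x)\sin\theta=0$ says exactly that $(\cos\theta,\sin\theta)\perp(P(x),Q(x))$, hence $(\cos\theta,\sin\theta)=\pm\|\psi(x)\|^{-1}(-Q(x),P(x))$. Substituting this into the gradient formula yields
\[ \nabla_x\Psi_\theta(x)=\pm\frac{1}{\|\psi(x)\|}\bigl(P(x)\nabla Q(x)-Q(x)\nabla P(x)\bigr)=\pm\frac{1}{\|\psi(x)\|}\,\gamma(x), \]
which is the asserted parallelism. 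Geometrically this records that both $\nabla_x\Psi_\theta(x)$ and $\gamma(x)$ are normal to the codimension-one hypersurface $M_\theta^{*}$ at $x$: the former because $M_\theta=\Psi_\theta^{-1}(0)$, the latter because $\gamma$ is proportional to the gradient of the argument-type map $\widetilde\phi$, whose level sets are, away from $M$, the pieces $E_\alpha$ of $M_\theta^{*}$ from Lemma~\ref{L3}(iv).

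Finally I would justify the choice of $\epsilon_0$ and the nonvanishing that makes ``parallel'' meaningful. Since $0$ is an isolated critical point of $\psi$, there is $\epsilon_0>0$ for which $D\psi$ has rank $2$ on $B^m_{\epsilon_0}(0)\setminus\{0\}$, so $\nabla P(x)$ and $\nabla Q(x)$ are linearly independent there. As $(\cos\theta,\sin\theta)\neq 0$, the vector $\nabla_x\Psi_\theta(x)$ is a nontrivial combination of these and hence nonzero; and as $(P(x),Q(x))\neq 0$, the same independence gives $\gamma(x)\neq 0$. Both being nonzero, the displayed identity gives genuine parallelism. The step needing most care is not the algebra but the bookkeeping: correctly reading the free parameter $\theta$ in the statement as the angle determined by $x\in M_\theta$ via Lemma~\ref{L3}, after which the result is a one-line substitution.
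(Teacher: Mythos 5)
Your proof is correct and follows essentially the same route as the paper's: both substitute the defining relation $\Psi_\theta(x)=0$ for the angle $\theta$ with $x\in M_\theta$ into the expression for $\gamma(x)$, exhibiting $\gamma(x)$ as a nonzero scalar multiple of $\nabla_x\Psi_\theta(x)$. The only differences are cosmetic (and an immaterial sign convention for $\Psi_\theta$): you solve for $(\cos\theta,\sin\theta)$ as a unit vector orthogonal to $(P(x),Q(x))$ and substitute uniformly, whereas the paper runs a case analysis on whether $\sin\theta\,Q(x)$ vanishes; your explicit check that both vector fields are nonvanishing is a welcome addition.
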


\begin{proof} Let $B_{\epsilon_{0}}^{m}(0)=\cup [M_{\theta}\cap
B_{\epsilon_{0}}^{m}(0)]$. For each $x\in
B_{\epsilon_{0}}^{m}(0)\setminus \{\psi^{-1}(0)\}$, $\exists
\theta \in \mathbb{R}$ such that
$\cos(\theta)P(x)=\sin(\theta)Q(x)$, since $x$ belongs to some $
M_{\theta}$.

\vspace{0.2cm}

\noindent So, consider the following cases:

\vspace{0.2cm}

1) $\sin(\theta)Q(x)\neq 0$;

2) $\sin(\theta)Q(x)= 0$.

\vspace{0.2cm}

\noindent If $1)$ holds, we have

$\displaystyle{P(x)=\frac{\sin(\theta)}{\cos(\theta)}Q(x)}$ and
$\gamma(x)=\displaystyle{\frac{\sin(\theta)}{\cos(\theta)}Q(x)\nabla
Q(x)-}$  $\displaystyle{Q(x)\nabla P(x)}$
$=\displaystyle{-\frac{Q(x)}{\cos(\theta)}}(\cos(\theta)\nabla
P(x)-\sin(\theta)\nabla Q(x))$. Then, $\gamma
(x)=-\displaystyle{\frac{Q(x)}{\cos(\theta)}\nabla
\Psi_{\theta}(x)}$.

\vspace{0.2cm}

\noindent If we have the particular case 2) $\sin(\theta)Q(x)=0$, consider
again two situations:

\vspace{0.2cm}

\noindent i) If $\sin(\theta)=0$ then $\cos(\theta)\neq
0$ and $P(x)=0$. Since $x\in
B_{\epsilon_{0}}^{m}(0)\setminus \psi^{-1}(0)$, then $\gamma(x)=-Q(x)\nabla P(x)$ and
$\nabla_{x}\Psi_{\theta}(x)=\cos(\theta)\nabla P(x)$.

\vspace{0.2cm}

\noindent ii) If $Q(x)=0$ then $\cos(\theta )=0$ and $P(x)\neq 0$, since $x\in
B_{\epsilon_{0}}^{m}(0)\setminus \psi^{-1}(0)$. Then,
$\sin(\theta )\neq 0$,
$\gamma (x)=P(x)\nabla Q(x)$ and $\nabla_{x}\Psi_{\theta
}(x)=-\sin(\theta )\nabla Q(x)$.

\vspace{0.5cm}

\noindent Therefore, in both cases we have $ \gamma (x)$ is parallel to
$\nabla_{x}\Psi_{\theta }(x).$

\end{proof}

\noindent In \cite{ja} the author proved that the critical points
of the map
$\displaystyle{\frac{\psi}{\|\psi\|}}:S_{\epsilon}^{m-1}\setminus
K_{\epsilon}\to S^{1}$ are precisely the points $x\in
S_{\epsilon}^{m-1}$ such that the vector fields
$\gamma(x)=P(x)\nabla Q(x)-Q(x)\nabla P(x)$ and $x$ are parallel.
However, the hypothesis of the main theorem implies that the
vector fields $x$ and $\nabla_{x}\Psi_{\theta}(x)$ are
transversal. So, using Lemma \ref{L1} above we have that the
projection
$\displaystyle{\frac{\psi}{\|\psi\|}}:S_{\epsilon}^{m-1}\setminus
K_{\epsilon}\to S^{1}$ is a submersion, for all $\epsilon >0$
sufficiently small.

\vspace{0.3cm}

In the following result we construct a smooth vector fields in $S_{\epsilon}^{m-1}\setminus
K_{\epsilon}$, whose solution do not fall in the empty $K_{\epsilon}$, for finite time. This
result will guarantee that the projection $\displaystyle{\frac{\psi}{\|\psi\|}}$ is a
onto submersion.

\begin{lemma}\label{L2}

There exists a smooth vector fields $\omega $ tangent to
$S_{\epsilon}^{m-1}\setminus K_{\epsilon}$, such that:

\item[(i)] $\displaystyle{\langle \omega(x),\frac{ P(x)\nabla
Q(x)-Q(x)\nabla P(x)}{P^{2}(x)+Q^{2}(x)}\rangle=1},$

\item[(ii)] $\mid \displaystyle{\langle \omega(x),\frac{
P(x)\nabla P(x)+Q(x)\nabla Q(x)}{P^{2}(x)+Q^{2}(x)}\rangle \mid
\leq M}; $ $\displaystyle{M>0}$, for each $\epsilon >0$,
sufficiently small.
\end{lemma}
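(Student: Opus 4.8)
The plan is to exhibit a single explicit vector field rather than to patch local pieces together. Write $\gamma(x)=P(x)\nabla Q(x)-Q(x)\nabla P(x)$ for the field appearing in $(i)$ and $\delta(x)=P(x)\nabla P(x)+Q(x)\nabla Q(x)=\tfrac12\nabla\|\psi\|^2$ for the one in $(ii)$, so that the two quantities to be controlled are $\langle\omega,\gamma/\|\psi\|^2\rangle$ and $\langle\omega,\delta/\|\psi\|^2\rangle$. Since $\omega$ must be tangent to $S^{m-1}_\epsilon$, i.e. $\langle\omega,x\rangle=0$, these inner products only see the tangential parts $\gamma_T:=\gamma-\frac{\langle\gamma,x\rangle}{\|x\|^2}x$ and $\delta_T$ of $\gamma$ and $\delta$; so I would first reduce everything to a statement about $\gamma_T$ and $\delta_T$. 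The natural candidate is then
\[
\omega(x):=\frac{\|\psi(x)\|^2}{\|\gamma_T(x)\|^2}\,\gamma_T(x),
\]
which is tangent to the sphere by construction and smooth provided $\gamma_T$ never vanishes on $S^{m-1}_\epsilon\setminus K_\epsilon$.

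First I would establish that $\gamma_T(x)\neq 0$ on $S^{m-1}_\epsilon\setminus K_\epsilon$, together with a uniform lower bound. By Lemma \ref{L1} the field $\gamma$ is parallel to $\nabla_x\Psi_\theta$, and the hypothesis of Theorem \ref{mt} gives $\big|\langle\nabla_x\Psi_\theta/\|\nabla_x\Psi_\theta\|,\,x/\|x\|\rangle\big|\leq 1-\rho$; hence $\gamma$ is never radial (this is exactly the submersion remark following Lemma \ref{L1}) and, quantitatively, $\|\gamma_T\|\geq\sqrt{\rho(2-\rho)}\,\|\gamma\|$. With $\gamma_T\neq 0$ the formula for $\omega$ is legitimate and smooth, and condition $(i)$ follows immediately from $\langle\gamma_T,\gamma\rangle=\|\gamma_T\|^2$, giving $\langle\omega,\gamma/\|\psi\|^2\rangle=1$ identically.

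It remains to verify $(ii)$, and this is where the real work lies. For the chosen $\omega$ one computes $\langle\omega,\delta/\|\psi\|^2\rangle=\langle\gamma_T,\delta_T\rangle/\|\gamma_T\|^2$, so by Cauchy--Schwarz it suffices to bound the ratio $\|\delta_T\|/\|\gamma_T\|$ uniformly on $S^{m-1}_\epsilon\setminus K_\epsilon$. Away from the link this is immediate by continuity and compactness, so the main obstacle is the behaviour as $x\to K_\epsilon$, where both $\gamma_T$ and $\delta_T$ tend to $0$. The plan to control the ratio there is to use that for $\epsilon$ small the variety $\psi^{-1}(0)$ meets $S^{m-1}_\epsilon$ transversally, so near a point of $K_\epsilon$ the gradients $\nabla P,\nabla Q$ are linearly independent and $P,Q$ vanish to first order; a leading-order expansion then shows $\|\gamma\|$ and $\|\delta\|$ both vanish exactly to order $\|\psi\|$, whence $\|\delta\|/\|\gamma\|$ stays bounded, while the estimate $\|\gamma_T\|\geq\sqrt{\rho(2-\rho)}\,\|\gamma\|$ prevents the denominator from collapsing faster than $\|\gamma\|$. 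Combining these yields $\|\delta_T\|/\|\gamma_T\|\leq M$ for some $M=M(\epsilon)>0$, completing $(ii)$. I expect the delicate point to be making the comparison $\|\gamma\|\sim\|\delta\|\sim\|\psi\|$ rigorous uniformly along $K_\epsilon$; the uniformity of $\rho$ in $\theta$ (and in $x$) in the hypothesis is precisely what guarantees that the lower bound on $\|\gamma_T\|/\|\gamma\|$ does not degenerate as one moves along the link.
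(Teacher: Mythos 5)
Your construction is exactly the paper's: the paper sets $u(x)=\gamma(x)-\langle\gamma(x),\frac{x}{\|x\|}\rangle\frac{x}{\|x\|}$ (your $\gamma_T$), takes $\omega=\frac{\|\psi\|^{2}}{\|u\|^{2}}\,u$, verifies (i) by the same orthogonality computation, and gets the same lower bound on $\|u\|$ in terms of $\|\gamma\|$ (the paper writes $\|u\|^{2}>\rho^{2}\|\gamma\|^{2}$; your $\sqrt{\rho(2-\rho)}$ is the sharp constant) from Lemma \ref{L1} together with the hypothesis of Theorem \ref{mt}. The only place you genuinely diverge is the boundedness of $\|\nabla(\|\psi\|^{2})\|/\|\gamma\|$ near the link, which is where the paper spends most of its effort: it invokes the curve selection lemma, expands $P$, $Q$, $\nabla P$, $\nabla Q$ in power series along an analytic arc landing on $K_{\epsilon}$, and rules out the bad case ($r=p$ with $P_{1}b_{0}-Q_{1}a_{0}=0$) because it would force $\nabla P\parallel\nabla Q$ at a point of $K_{\epsilon}$, contradicting $\Sigma(P,Q)=\{0\}$. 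You instead argue pointwise on $K_{\epsilon}$: linear independence of $\nabla P$ and $\nabla Q$ there gives $\|\gamma\|\geq c\,\|\psi\|$ to leading order, while trivially $\|\delta\|\leq C\,\|\psi\|$, and compactness of $K_{\epsilon}$ makes the constants uniform. Both arguments hinge on the same fact (independence of the gradients along the link, i.e.\ the isolated critical point hypothesis); yours is somewhat more elementary in that it bypasses the curve selection lemma, at the cost of having to make the leading-order comparison uniform by hand, whereas the arc-wise computation is the standard mechanism for such subanalytic boundedness claims. Note also that, as in the paper, the transversality of $\psi^{-1}(0)$ to the sphere is not actually needed for this estimate; only the rank condition on $D\psi$ along $K_{\epsilon}$ is used.
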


\proof For each $x\in S_{\epsilon}^{m-1}\setminus K_{\epsilon }$
define $\displaystyle{u(x):=\gamma (x)- \langle \gamma(x),
\frac{x}{\|x \|}}\rangle.\frac{x}{\|x\|}$, the projection of
$\gamma(x)$ in $T_{x}(S_{\epsilon}^{m-1}\setminus K_{\epsilon })$.
Under the hypotheses of the main Theorem and Lemma \ref{L1} above,
this vector field is smooth and never zero.

\noindent Let
$w(x):=\displaystyle{(\frac{P^{2}(x)+Q^{2}(x)}{\|u(x)\|}
).\frac{u(x)}{\|u(x)\|}}$. So we have:

\vspace{0.3cm}

i) $\displaystyle {\langle w(x),\frac{P(x)\nabla Q(x)-Q(x)\nabla
P(x)}{P(x)^{2}+Q(x)^{2}} \rangle }$ = $\displaystyle{\langle
\frac{u(x)}{\|u(x)\|^{2}}, \gamma(x) \rangle }$=

$\displaystyle{\frac{1}{\|u(x)\|^{2}}\langle u(x), \gamma (x)-
\langle \gamma (x), \frac{x}{\|x\|}} \rangle
.\frac{x}{\|x\|}\rangle$ =
$\displaystyle{\frac{1}{\|u(x)\|^{2}}}\langle u(x),u(x) \rangle
=1$,

\vspace{0.1cm}

\noindent where the second equality we use the fact
$\displaystyle{ u(x)\perp \langle \gamma(x),\frac{x}{\|x\|}
\rangle . \frac{x}{\|x\|}}$. This proves the first statement.

\vspace{0.2cm}

ii) $\displaystyle {\langle w(x),\frac{P(x)\nabla P(x)+Q(x)\nabla
Q(x)}{P(x)^{2}+Q(x)^{2}}\rangle }$ =$ \displaystyle  {\frac{1}{2}
\langle w(x),\frac {\nabla (\|\psi(x) \|^{2})}{P(x)^{2}+Q(x)^{2}}
\rangle }$= \\ $\displaystyle{\frac{1}{2}\langle
\frac{u(x)}{\|u(x)\|^{2}}, \nabla (\|\psi(x)\|^{2})\rangle }$
$\leq \displaystyle{\frac{1}{2}\frac{\|u(x)\|.\|\nabla
(\|\psi(x)\|^{2})\|}{\|u(x)\|^{2}}}$
$=\displaystyle{\frac{1}{2}\frac{\|\nabla(\|\psi(x)\|^{2})\|}{\|u(x)\|}}.$

\vspace{0.3cm}

\noindent Since
$\displaystyle{\|u(x)\|^{2}=\|\gamma(x)\|^{2}-\langle
\gamma(x),\frac{x}{\|x\|} \rangle ^{2}}$ $=
\|\gamma(x)\|^{2}(1-\langle \frac{\gamma(x)}{\|\gamma(x)\|},
\frac{x}{\|x\|} \rangle ^{2})>$\\

$> \rho ^{2}\|\gamma(x)\|^{2}$ then,\\
$\displaystyle{\mid \langle w(x), \frac{\nabla
(\|\psi(x)\|^{2})}{P(x)^{2}+Q(x)^{2}} \rangle \mid }$ $\leq
\displaystyle{\frac{1}{2}\frac{\|\nabla(\|\psi(x)\|^{2})\|}{\|u(x)\|}}$
$<\displaystyle{\frac{1}{2\rho }\frac{\|\nabla
(\|\psi(x)\|^{2})\|}{\|\gamma (x)\|}}.$

%Como, $\displaystyle{\frac{\|\nabla (\|f(x)\|^{2})\|}{\gamma(x)}}$
%$=\displaystyle{\frac{\|P(x)\nabla Q(x)+Q(x)\nabla
%P(x)\|}{\|P(x)\nabla Q(x)-Q(x)\nabla P(x)\|}}$ temos,

\noindent It is enough to verify that for all $x\in
{S_{\epsilon}}^{m-1}\setminus K_{\epsilon}$, it is possible to get
upper bounds for the last expression by using the curve selection
lemma.

For this, consider $\delta >0$ a real number sufficiently small,
and a non-constant real analytic curve $\alpha : [0,\delta )\to
S_{\epsilon}^{m-1}$, with $\alpha(t)\in
S_{\epsilon}^{m-1}\setminus K_{\epsilon}$ and $\alpha (0)\in
K_{\epsilon}$. Using the Taylor expansion we have:

$\begin{cases}

\alpha(s)=\alpha_{0} + \alpha_{1}s^{n}+...;n\geq 1, e~
\alpha_{0}\in K_{\epsilon}.\\

P(\alpha(s))=P_{0}+P_{1}s^{r}+...;r\geq 1;\\

\nabla P(\alpha (s))=a_{0}+a_{1}s^{l}+...,l\geq 1,~a_{0}\neq 0;\\

Q(\alpha (s))=Q_{0} +Q_{1}s^{k}+...., k\geq 1;\\

\nabla Q(\alpha (s))= b_{0}+b_{1}s^{p}+...,p\geq 1, ~b_{0}\neq
0.\\

\end{cases}$

\vspace{0.2cm}

\noindent Since $\alpha_{0}\in K_{\epsilon}=S_{\epsilon}^{m-1}\cap
\psi^{-1}(0)\Longrightarrow $ $P(\alpha(0))=P_{0}=0$ and
$Q(\alpha(0))=Q_{0}=0$. Therefore

$\begin{cases}
P(\alpha(s))=P_{1}s^{r}+..., r\geq 1, P_{1}\neq 0 \\
Q(\alpha(s))=Q_{1}s^{k}+..., k\geq 1, Q_{1}\neq 0
\end{cases}$

\vspace{0.2cm}

\noindent Since $\Sigma (P,Q)=\{0\}$ the vectors $\nabla P(\alpha
(0))$ and $ \nabla Q(\alpha (0))$ are not parallel outside of
origin.

Then,
$\displaystyle{\frac{\|\nabla(\|\psi(\alpha(s))\|^{2})\|^{2}}{\|\gamma
(\alpha(s))\|^{2}}}=$\\ $\displaystyle{
\frac{\|(P_{1}s^{r}+...)(a_{0}+a_{1}s^{l}+...)+(Q_{1}s^{k}+...)(b_{0}+b_{1}s^{p}+
...)\|^{2}}{\|(P_{1}s^{r}+...)(b_{0}+b_{1}s^{p}+...)-(Q_{1}s^{k}+...)(a_{0}+a_{1}s^{l}+
...)\|^{2}}=}$\\

\noindent
$\displaystyle{\frac{\|P_{1}a_{0}s^{r}+Q_{1}b_{0}s^{p}+...\|^{2}}{\|P_{1}b_{0}s^{r}-Q_{1}a_{0}s^{p}+
...\|^{2}}}=$
$\displaystyle{\frac{\|P_{1}a_{0}s^{r}+Q_{1}b_{0}s^{p}\|^{2}+
s^{2}(U(s))}{\|P_{1}b_{0}s^{r}-Q_{1}a_{0}s^{p}\|^{2}+
s^{2}(V(s))}}$; for some analytic functions $U(s)$, $V(s)$ in
variable $s$.

\vspace{0.2cm}

\noindent The last expression has a upper bound for s small
enough, if we take any natural $r,p$ with $r\neq p$. Now it
remains to check if $r=p$ and $P_{1}b_{0}-Q_{1}a_{0}=0$ because,
in this particular case, the order of denominator can be bigger
than the order of numerator when $s$ goes to zero. It means that
the last expression above goes to infinity.

But $\displaystyle{P_{1}b_{0}-Q_{1}a_{0}=0 \Longleftrightarrow
b_{0}=\frac{Q_{1}}{P_{1}}a_{0}}$ and

$\begin{cases}
\nabla P(\alpha(s))=a_{0}+a_{1}s^{l}+... \\
\nabla Q(\alpha (s))=\frac{Q_{1}}{P_{1}}a_{0}+b_{1}s^{k}+...
\end{cases}$

\vspace{0.2cm}

Then, $\nabla P(\alpha(0))//\nabla Q(\alpha(0))$ in
$S_{\epsilon}^{m-1}$, contradicting  $\sum (P,Q)=\{0\}$.

\endproof

\section{Main result}

\begin{proposition}\label{P1}

Let $\psi:(\mathbb{R}^{m},0)\to (\mathbb{R}^{2},0)$ with isolated critical point
at $0\in \mathbb{R}^{m}$, such that the associated Seade's family satisfies
the hypothesis of main theorem, then there exist $\epsilon_{0}$ such that, for all
$0<\epsilon \leq \epsilon_{0}$, the map $\displaystyle{\frac{\psi }{\|\psi
\|}:S_{\epsilon}^{m-1}\setminus K_{\epsilon}\to S^{1}}$ is a
smooth projection of a locally trivial fibre bundle.
\end{proposition}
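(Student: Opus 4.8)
The goal is to upgrade the two preliminary lemmas into the statement that $\phi = \frac{\psi}{\|\psi\|}$ is the projection of a locally trivial fibre bundle over $S^1$. The plan is to verify the hypotheses of Ehresmann's fibration theorem: that $\phi$ is a proper submersion onto $S^1$. Properness is automatic here, since $S_\epsilon^{m-1}\setminus K_\epsilon$ has compact closure $S_\epsilon^{m-1}$ and $\phi$ extends continuously with $\phi^{-1}(\text{pt})$ having closure meeting $K_\epsilon$; what genuinely needs checking is that each fibre $\phi^{-1}(e^{i\theta})$ is an open manifold whose closure in $S_\epsilon^{m-1}$ is a compact manifold with boundary $K_\epsilon$, and that $\phi$ is surjective. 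First I would invoke Lemma~\ref{L1} together with Jacquemard's characterization of the critical points of $\phi$ (quoted in the text): the critical points are exactly the $x$ where $\gamma(x)$ and $x$ are parallel, and since the main-theorem hypothesis forces $\nabla_x\Psi_\theta(x)$ to be transverse to $x$, Lemma~\ref{L1} gives that $\gamma(x)$ and $x$ are never parallel on $S_\epsilon^{m-1}\setminus K_\epsilon$. Hence $\phi$ is a submersion on $S_\epsilon^{m-1}\setminus K_\epsilon$ for all small $\epsilon$.

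Next I would use the vector field $\omega$ produced in Lemma~\ref{L2} to achieve local triviality directly, following Milnor's method rather than merely citing Ehresmann. The idea is that $\omega$ is tangent to $S_\epsilon^{m-1}\setminus K_\epsilon$ and satisfies $\langle \omega, \frac{\gamma}{\|\psi\|^2}\rangle = 1$; this means $\omega$ projects under $d\phi$ to a nonvanishing, constant-speed vector field on $S^1$ (the unit angular field). Integrating $\omega$ therefore moves points along the fibres' transverse direction and lifts the rotation of $S^1$. I would show that the flow of $\omega$ is defined for all real time: this is where the second estimate of Lemma~\ref{L2}, namely $|\langle \omega, \frac{\nabla\|\psi\|^2}{2\|\psi\|^2}\rangle|\le M$, enters. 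That bound controls the logarithmic derivative $\frac{d}{dt}\log\|\psi(\text{flow})\|^2$ along trajectories, so $\|\psi\|$ along an integral curve cannot decay to $0$ in finite time; equivalently, trajectories cannot reach $K_\epsilon$ in finite time, so the flow is complete on $S_\epsilon^{m-1}\setminus K_\epsilon$.

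With a complete flow whose $\phi$-image is uniform rotation of $S^1$, the standard argument gives the trivialization: fixing a base fibre $F = \phi^{-1}(1)$, the map $(x,\theta)\mapsto$ (time-$\theta$ flow of $x$) furnishes a diffeomorphism $F\times S^1 \to S_\epsilon^{m-1}\setminus K_\epsilon$ intertwining the second projection with $\phi$, exhibiting the global (hence locally) trivial bundle structure; surjectivity of $\phi$ follows at once since every angle is attained by flowing. The last remark in Lemma~\ref{L2}'s proof — that the flow does not fall into $K_\epsilon$ in finite time — is precisely what makes this rigorous. I expect the main obstacle to be the completeness/non-escape argument: one must be careful that the flow lines stay on the single sphere $S_\epsilon^{m-1}$ (guaranteed by $\omega$ being tangent to it) while staying bounded away from $K_\epsilon$ for all finite time, and that the estimate $M$ can be taken uniform over the compact sphere as $\epsilon \to 0$; the curve-selection-lemma bound at the end of Lemma~\ref{L2} is what secures this uniformity, so the remaining work is to assemble these pieces into the flow-box argument rather than to prove any new inequality.
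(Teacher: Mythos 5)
Your overall strategy coincides with the paper's: show $\psi/\|\psi\|$ is a submersion via Lemma~\ref{L1} together with Jacquemard's characterization of the critical points, then run Milnor's flow argument with the vector field $\omega$ of Lemma~\ref{L2} (whose first property makes $\theta(\phi_{t}(x))=t+\theta_{0}$ along trajectories), and use the bound on $\frac{d}{dt}\log\|\psi(\phi_{t})\|^{2}$ from the second property to conclude that trajectories cannot fall into $K_{\epsilon}$ in finite time, so the flow is complete. This is exactly the paper's proof up to the last step.

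The last step, however, contains a genuine error. You claim that $(x,\theta)\mapsto\phi_{\theta}(x)$ furnishes a diffeomorphism $F\times S^{1}\to S_{\epsilon}^{m-1}\setminus K_{\epsilon}$, i.e.\ a \emph{global} trivialization. That map is not well-defined on $F\times S^{1}$: the time-$2\pi$ map of the flow is the monodromy of the fibration, which in general is not the identity on $F$, so $\phi_{0}(x)=x\neq\phi_{2\pi}(x)$ even though $0$ and $2\pi$ are identified in $S^{1}$; Milnor fibrations are typically \emph{not} globally trivial (the total space is the mapping torus of the monodromy). The correct conclusion, and the one the paper draws, is only local triviality: over a small arc $U_{\alpha}\ni e^{i\alpha}$ one uses $(e^{is},x)\mapsto\phi_{s-\alpha}(x)$ from $U_{\alpha}\times\pi^{-1}(e^{i\alpha})$ onto $\pi^{-1}(U_{\alpha})$, which is well-defined because $s$ ranges over an interval rather than all of $S^{1}$, and which commutes with the projections. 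Relatedly, your opening assertion that ``properness is automatic'' so that Ehresmann's theorem could be invoked is false: $\psi/\|\psi\|$ is not proper on $S_{\epsilon}^{m-1}\setminus K_{\epsilon}$ precisely because the fibres accumulate on $K_{\epsilon}$, and circumventing this failure is the entire point of the flow argument. Since you abandon Ehresmann in favour of the flow method this does not derail the proof, but the remark should be removed and the trivialization restated locally.
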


\proof

\noindent Now we are ready to construct the prove of Theorem 1.6.

Under it hypothesis and by Lemma 3.2, there exists a smooth
vector fields $\omega $ tangent to
$S_{\epsilon}^{m-1}\setminus K_{\epsilon}$, satisfying conditions
i) and ii) of lemma. Now taking the flow $\phi_{t}(x) $ of
$\omega \in \displaystyle {T_{x}(S_{\epsilon}^{m-1}\setminus
K_{\epsilon})},$ with $\phi_{0}(x)=x$, and considering
$\displaystyle{\frac{\psi(x)}{\|\psi(x)\|}=e^{i\theta (x)}}$, we have
$\theta (x)=Re(-i\ln \psi(x))$ and

\noindent $\displaystyle {\frac {d}{dt}(\theta
(\phi_{t}(x)))}=\langle {\dot{\phi }_{t}(x)},\frac
{P(\phi_{t}(x))\nabla Q(\phi_{t}(x))-Q(\phi_{t}(x))\nabla
P(\phi_{t}(x))}{P^{2}(\phi_{t}(x))+Q^{2}(\phi_{t}(x))} \rangle
=1$\\
%$\displaystyle{\langle \omega (x), \frac
%{P(\phi_{t}(x))\nabla Q(\phi_{t}(x))-Q(\phi_{t}(x))\nabla
%P(\phi_{t}(x))}{P^{2}(\phi_{t}(x))+Q^{2}(\phi_{t}(x))}}...=1$

\noindent then, $\theta (\phi_{t}(x))=t+c;$ $\theta
(\phi_{t}(x))=t+\theta_{0}$, where $\theta (\phi_{0}(x))=\theta
(x)=\theta _{0}.$ it the initial value.

\noindent Therefore, if the flow $\phi_{t}$ is well defined
$\forall t\in \mathbb{R}$, we have $\displaystyle{\frac{\psi}{\|
\psi \|}:S_{\epsilon}^{m-1}\setminus K_{\epsilon}}$
$\displaystyle{\to S^{1}}$ is an onto smooth submersion over
$S^{1}$, because the projection function
$\displaystyle{\frac{\psi}{\|\psi \|}}$ wrap $\phi_{t}(x)$ around
$S^{1}$, all time $t$.

\noindent If for some fixed $t_{0}\in \mathbb{R}$, $t\to t_{0}
\Longrightarrow$ $\phi_{t}\to K_{\epsilon}$, then $ \|
\psi(\phi_{t})\| \to 0$ and $\log \| \psi(\phi_{t})\| \to \infty
$. \\But, $\mid \frac{d}{dt}(\log ( \| \psi(\phi_{t})\|^{2})) \mid
$ = $\mid \langle \dot{\phi_{t}},
\displaystyle{\frac{\nabla (\|\psi(\phi_{t})
\|^{2})}{P(\phi_{t})^{2}+Q(\phi_{t})^{2}}}\rangle \mid \leq M$ has
bounded derivative, so this flow is well defined for all $t\in
\mathbb{R}$.

\noindent Now using the same idea of \cite[page 43]{mi}, see also
\cite[page 22]{ja1}, consider $\displaystyle{\pi
=\frac{\psi}{\|\psi\|}}$, $x_{0}\in S_{\epsilon}^{m-1}\setminus
K_{\epsilon}$ fixed, define $h_{x_{0}}(t)=\phi_{x_{0}}(t)$. For
each $t$, it is well known that $h_{t}:
S_{\epsilon}^{m-1}\setminus K_{\epsilon}\to
S_{\epsilon}^{m-1}\setminus K_{\epsilon}$ is a
$C^{\infty}-$diffeomorphism given by the flow of $\omega$ in Lemma
\ref{L2}, and if $e^{is}\in S^{1}$,
$h_{t}(\pi^{-1}(e^{is}))=\pi^{-1}(e^{i(s+t)})$, this says that
this flow is transverse to all fiber $F_{t}:=\pi^{-1}(t)$.
Furthermore, if we consider $U_{\alpha}$ a neighborhood of $e^{i
\alpha}$ in $S^{1}$, small enough, we have the following commutative
diagram:

%%%%%%%%%%%%%%%%%%%%%%%%%%%%%%%%% commutative diagram%%%%%%%%%%%%%%%%%%

\begin{center}
$\xymatrix{U_{\alpha}\times \displaystyle{\pi^{-1}(\alpha)}
\ar[d]_{\pi_{1}} \ar[r]^{h_{t}}
& \displaystyle{\pi^{-1}(U_{\alpha})} \ar[ld]^{\pi} \\
\displaystyle{U_{\alpha}} & \\}$
\end{center}

%%%%%%%%%%%%%%%%%%%%%%%%%%%%%%%%%%%%%%%%%%%%%%%%%%%%%%%%%%%%%%%%%%%%%%%%%
%%%%%%%%%%%%%%%%%%%%%%%%%%%%%%%%%%%%%%%%%%%%%%%%%%%%%%%%%%%%%%%%%%%%%%%%

\noindent where $\pi_{1}$ is the projection on first coordinate.

\endproof

%\begin{center}
%$\begin{CD}
%U_{\alpha}\times \pi^{-1}(\alpha) @>{\operatorname{\Gamma_{\alpha}}}>> \pi^{-1}(U_{\alpha})\\
%@V{\operatorname{\pi}_{U_{\alpha}}}VV @VV{\operatorname{\pi}}V \\
%{S}^{1} @= {S}^{1}
%\end{CD}$
%\end{center}

%%%%%%%%%%%%%%%%%%%%%%%%%%%%%%%%%%%%%%%%%%%%%%%%%%%%%%%%%%%%%%%%%%%%%%%%
%%%%%%%%%%%%%%%%%%%%%%%%%%%%%%%%%%%%%%%%%%%%%%%

\section{Comparing this result with $(c)-$regularity condition}

It is easy to see that the hypothesis of Theorem \ref{mt} implies
$(m)-$ condition for the pair $(X,Y)$, in a tubular
neighborhood $T_{Y}^{\epsilon}=\{ x\in \mathbb{R}^{m}\times
\mathbb{R}: \rho(x,\theta)=\sum_{i=1}^{m}x^{2}_{i}< \epsilon \}$,
where $X=\Psi^{-1}(0)\setminus{\{0\} \times \mathbb{R}}$ and
$Y=\{0\}\times \mathbb{R}$.

\begin{corollary}

If the pair $(X,Y)$, defined as in Theorem \ref{T1}, satisfy the
Bekka's $(c)-$ condition on $\mathbb{R}^{m}\times \mathbb{R}$ with
respect the control function
$\rho(x,\theta)=\Sigma_{i=1}^{n}x^{2}_{i}$, then
$\displaystyle{\psi }$ satisfy the strong Milnor condition at
origin.

\end{corollary}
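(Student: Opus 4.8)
The plan is to derive the corollary directly from Theorem \ref{mt}: it suffices to show that Bekka's $(c)$-condition for the pair $(X,Y)$ forces the uniform $(m)$-condition, namely the hypothesis $\mid\langle \nabla_x\Psi_\theta(x)/\|\nabla_x\Psi_\theta(x)\|,\,x/\|x\|\rangle\mid\leq 1-\rho$ of the main theorem. First I would rewrite item iii) of the $(c)$-condition through the Proposition as $\lim_{(x,\theta)\to Y}\Pi_Y(\operatorname{grad}(\rho|_X)/\|\operatorname{grad}(\rho|_X)\|)=0$, and then compute $\operatorname{grad}(\rho|_X)$ explicitly in the coordinates $(x,\theta)$ of $\mathbb{R}^m\times\mathbb{R}$. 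Since $X$ is the hypersurface $\Psi^{-1}(0)$ with normal direction $\nabla\Psi=(\nabla_x\Psi,\partial_\theta\Psi)$ and $\nabla\rho=(2x,0)$, a short computation gives that the $Y$-component (the $\theta$-component extracted by $\Pi_Y$) equals $-2\langle x,\nabla_x\Psi_\theta\rangle\,\partial_\theta\Psi/(\|\nabla_x\Psi_\theta\|^2+(\partial_\theta\Psi)^2)$, while $\|\operatorname{grad}(\rho|_X)\|^2=4\|x\|^2-4\langle x,\nabla_x\Psi_\theta\rangle^2/(\|\nabla_x\Psi_\theta\|^2+(\partial_\theta\Psi)^2)$.

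The next step is to read both the $(c)$-data and the target inequality in terms of the single scalar $g(x,\theta):=\langle \nabla_x\Psi_\theta(x),x\rangle/(\|\nabla_x\Psi_\theta(x)\|\,\|x\|)$, whose modulus must be bounded away from $1$. Here I would use the identity $\Psi^2+(\partial_\theta\Psi)^2=\|\psi\|^2$, which on $X=\Psi^{-1}(0)$ reduces to $\partial_\theta\Psi=\pm\|\psi\|$; in particular $\partial_\theta\Psi\to 0$ as one approaches $Y$. Substituting, the squared normalized $\Pi_Y$-term becomes a rational expression in $g$, $\|\nabla_x\Psi_\theta\|^2$ and $\|\psi\|^2$, and item ii) (that $\rho|_X$ is a submersion) becomes exactly the non-vanishing of $\|\nabla_x\Psi_\theta\|^2(1-g^2)+\|\psi\|^2$. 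This makes transparent that controlling $1-g^2$ from below is equivalent to the uniform $(m)$-condition, and, by Lemma \ref{L1} where $\gamma$ is parallel to $\nabla_x\Psi_\theta$, this is precisely the absence of critical points of $\psi/\|\psi\|$ required to invoke Theorem \ref{mt}.

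The heart of the argument, and the step I expect to be the main obstacle, is to upgrade the $(c)$-condition, which is only a limiting statement along sequences tending to $Y$, into the uniform bound $\mid g\mid\leq 1-\rho$ on a whole punctured neighborhood. I would argue by contradiction with the curve selection lemma, exactly as in the proof of Lemma \ref{L2}: if no such $\rho$ exists, choose a non-constant real analytic curve in $X$ tending to a point of $Y$ along which $|g|\to 1$, expand $1-g^2$, $\|\psi\|^2$ and $\|\nabla_x\Psi_\theta\|^2$ in Puiseux series, and compare leading orders. The delicate point is that these three quantities are not independent: they are linked by the analytic structure of $\psi$ and by the isolated-singularity hypothesis, equivalently by a \L ojasiewicz-type inequality relating $\|\gamma\|$, $\|\psi\|$ and the radial distance. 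One must use these constraints to rule out the degenerate rate combinations in which $g\to\pm1$ would still be compatible with both item ii) and $\lim\Pi_Y(\cdots)=0$, showing instead that any such curve makes the normalized $\Pi_Y$-term of non-positive order, contradicting the $(c)$-condition. Once the uniform bound is secured, Theorem \ref{mt} applies verbatim and yields the Strong Milnor condition.
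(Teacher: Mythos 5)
Your overall strategy --- reduce the corollary to the hypothesis of Theorem \ref{mt} and then apply that theorem --- is the same as the paper's, and your preparatory computations are correct: the expression for the $Y$-component of $\mathrm{grad}(\rho\mid_X)$, the formula for $\|\mathrm{grad}(\rho\mid_X)\|^2$, the identity $\Psi^2+(\partial_\theta\Psi)^2=\|\psi\|^2$, and the reformulation of item ii) of the $(c)$-condition all check out. The difference is that the paper's proof is a one-line citation: it invokes the Bekka--Koike result that $(c)$-regularity implies condition $(m)$ (and Whitney $(a)$-regularity) for the pair $(X,Y)$, and condition $(m)$ for the control function $\rho(x,\theta)=\sum_{i}x_i^2$ is exactly the transversality of the manifolds $M_\theta\setminus\{0\}$ to the small spheres, which is what feeds the machinery of Section 3. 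You instead set out to re-prove this implication from scratch and, beyond that, to extract the \emph{uniform} bound $|g|\le 1-\rho$, with $g=\langle\nabla_x\Psi_\theta(x),x\rangle/(\|\nabla_x\Psi_\theta(x)\|\,\|x\|)$ and $\rho$ independent of both $\theta$ and the radius.

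That last step is where your proposal has a genuine gap. You correctly flag it as ``the main obstacle,'' but what you offer there is a plan, not an argument: expand in Puiseux series, compare leading orders, and then ``rule out the degenerate rate combinations.'' The degenerate case is precisely where the difficulty lives: along a curve tending to $Y$ one can a priori have $1-g^2\to 0$ while the normalized $\Pi_Y$-term, which equals $g^2A^2B^2/\bigl((A^2+B^2)(A^2(1-g^2)+B^2)\bigr)$ with $A=\|\nabla_x\Psi_\theta\|$ and $B=\|\psi\|$, still tends to $0$, provided $A/B\to 0$ at a suitable rate; excluding this requires the {\L}ojasiewicz-type gradient inequality that you name but never state or verify. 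Until that case is actually closed, the implication ``$(c)$-condition $\Rightarrow$ hypothesis of Theorem \ref{mt}'' is not established by your argument. Note also that you may be aiming at more than is needed: condition $(m)$ already gives pointwise non-parallelism on $X\cap T_Y^{\epsilon_0}$, and for each fixed $\epsilon$ the compactness of $X\cap(S^{m-1}_{\epsilon}\times[0,2\pi])$ yields a bound $1-\rho_\epsilon$, which is all that Lemmas \ref{L1} and \ref{L2} actually use on a fixed sphere; the radius-uniform constant is exactly the part you leave open, and the paper sidesteps it entirely by citing \cite{kbsk}.
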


\proof It turn out that Bekka's $(c)-$ regularity implies
condition $(m)$ and Whitney $(a)-$regularity condition for the
pair of strata $(X,Y)$\cite{kbsk}.

\endproof

\vspace{0.3cm}

The example below shows a real analytic maps germ which satisfies
the hypothesis of Theorem \ref{mt} but the associated pair of strata
$(X,Y)$ does not satisfy Whitney $(a)-$ regularity.
More details can be found in \cite{acs}:

\begin{example}
\begin{equation*}
\begin{cases}
P=x \\
Q=yx^{2}+y^{3}
\end{cases}
\end{equation*}
\end{example}

Observe that
$\Psi(x,y,\theta)=\cos(\theta)x-\sin(\theta)(yx^{2}+y^{3})$, so
$\nabla \Psi_{\theta}(x,y)=(\cos(\theta)-2xy\sin(\theta),
-(x^{2}+3y^{2})\sin(\theta)).$ Let's verify that in some punctured
neighborhood of the origin the vectors $\nabla \Psi_{\theta}(x,y)$ and
the position vector $(x,y)$ are not parallel, for all $\theta \in
\mathbb{R}$. It will be enough to solve the following system:

$\begin{cases}
\left\langle \nabla_{(x,y)} \Psi_{\theta}(x,y),(-y,x) \right\rangle =0\\
\Psi_{\theta}(x,y)=0
\end{cases}$

\vspace{0.2cm}

Or,

$\begin{cases}
y\cos(\theta)+(xy^{2}+x^{3})\sin(\theta) =0\\
x\cos(\theta)-(yx^{2}+y^{3})\sin(\theta)=0
\end{cases}$

\vspace{0.2cm}

The matrix of the system is

$$\begin{pmatrix}
y & xy^{2}+x^{3}\\
x & -(yx^{2}+y^{3})

\end{pmatrix}.
\begin{pmatrix}

\cos(\theta) \\
\sin(\theta)

\end{pmatrix}=
\begin{pmatrix}
0 \\
0
\end{pmatrix}$$

The vector $(\cos(\theta),\sin(\theta))$ is always never zero, so
we have to check the rank of the matrix on the right, but this
determinant is
$-y(yx^{2}+y^{3})-x(xy^{2}+x^{3})=-(x^{2}+y^{2})^{2}$. It is equal
zero, iff $x=y=0$. So, this map germ satisfies the hypothesis of
main theorem.

In order to see that the associated pair $(X,Y)$ of the map germ
above does not satisfy the Whitney $(a)-$ regularity, consider the
sequences $x_{i}, y_{i}, \theta_{i}$, with $x_{i}\to 0, y_{i}=0,
\theta_{i}=\frac{\pi}{2}$. It is easy to see that Whitney
$(a)-$regularity fails along this sequence.

It means that the hypothesis given in Theorem \ref{mt} is weaker
than the $(c)-$ regularity condition over the pair $(X,Y)$ given
by the authors in \cite{rs}.

\vspace{0.3cm}

\begin{example}
\begin{equation*}
\begin{cases}
P=z(x^{2}+y^{2}+z^{2}) \\
Q=y-x^{3}
\end{cases}
\end{equation*}
\end{example}

It is easy to see that this map germ has an isolated critical point at
origin and the link is given by $K_{\epsilon}=\{z=0, y=x^{3}\}\cap
S^{2}_{\epsilon}$, i.e, two points. The points where
$\displaystyle{\nabla_{(x,y,z)}\Psi_{\theta}(x,y,z)}$ and the vector position $(x,y,z)$ are parallels 
satisfies the following system, for some $\lambda \in \mathbb{R}^{*}$:

\vspace{0.3cm}

$\begin{cases}
2xz\cos(\theta)+3x^{2}\sin(\theta)=\lambda x \\
2yz\cos(\theta)-\sin(\theta)=\lambda y \\
(x^{2}+y^{2}+3z^{2})\cos(\theta)=\lambda z\\
cos(\theta)z(x^{2}+y^{2}+z^{2})=sin(\theta)(y-x^3)
\end{cases}$

\vspace{0.3cm}

\noindent Making some calculations you will get only the trivial solution.

\vspace{1cm}

{\textit{Acknowledgements}: This article was completed during my
visit to Northeastern University, Boston, USA, from February 16, 2006 - June 25, 2006.
I would like to thank Professor Terence T. Gaffney for
making my visit possible and for many helpful conversations during that
time.}

\vspace{0.1cm}

\end {document}